\theoremstyle{plain}
\newtheorem{theorem}[equation]{Theorem}
\newtheorem{cor}[equation]{Corollary}
\newtheorem{prop}[equation]{Proposition}
\newtheorem{lemma}[equation]{Lemma}
\newtheorem{utheorem}{\textrm{\textbf{Theorem}}}
\theoremstyle{definition}
\newtheorem{remark}[equation]{Remark}
\newtheorem{defn}[equation]{Definition}
\newtheorem{example}[equation]{Example}
\numberwithin{equation}{section}
\newcommand{\R}{\mathbb{R}}
\newcommand{\C}{\mathbb{C}}
\newcommand{\bp}{\mathbb{P}}
\newcommand{\calh}{\mathcal{H}}
\newcommand{\HS}{\mathcal{S}_2}
\newcommand{\tangle}[1]{\langle #1 \rangle}
\DeclareMathOperator{\Id}{\rm Id}
\DeclareMathOperator{\tr}{\rm tr}
\DeclareMathOperator{\rk}{\rm rk}
\begin{document}
\title[Sharp nonzero lower bounds for the Schur product theorem]{Sharp
nonzero lower bounds for the\\ Schur product theorem}

\author{Apoorva Khare}

\address{Department of Mathematics, Indian Institute of Science,
Bangalore -- 560012, India; and
Analysis \& Probability Research Group, Bangalore -- 560012, India}

\email{\tt khare@iisc.ac.in}

\date{\today}

\subjclass[2010]{15B48, 47B10 (primary); 15A45, 42A82, 43A35, 46C05, 47A63,
(secondary)}

\keywords{Positive semidefinite matrix, Schur product theorem, Loewner
ordering, Hadamard product, tracial inequality, positive definite kernel}

\begin{abstract}
By a result of Schur [\textit{J.~reine angew.~Math.}~1911], the entrywise
product $M \circ N$ of two positive semidefinite matrices $M,N$ is again
positive.
Vyb\'iral [\textit{Adv.\ Math.}~2020] improved on this by showing the
uniform lower bound $M \circ \overline{M} \geq E_n / n$ for all $n \times
n$ real or complex correlation matrices $M$, where $E_n$ is the all-ones
matrix.
This was applied to settle a conjecture of Novak [\textit{J.~Complexity}
1999] and to positive definite functions on groups.
Vyb\'iral (in his original preprint) asked if one can obtain similar
uniform lower bounds for higher entrywise powers of $M$, or for $M \circ
N$ when $N \neq M, \overline{M}$. A natural third question is to ask for
a tighter lower bound that does not vanish as $n \to \infty$, i.e.~over
infinite-dimensional Hilbert spaces.

In this note, we affirmatively answer all three questions by extending
and refining Vyb\'iral's result to lower-bound $M \circ N$, for arbitrary
complex positive semidefinite matrices $M,N$.
Specifically: we provide tight lower bounds, improving on Vyb\'iral's
bounds. Second, our proof is `conceptual' (and self-contained), providing
a natural interpretation of these improved bounds via tracial
Cauchy--Schwarz inequalities. Third, we extend our tight lower bounds to
Hilbert--Schmidt operators.
As an application, we settle Open Problem~1 of
Hinrichs--Krieg--Novak--Vyb\'iral [\textit{J.\ Complexity}, in press],
which yields improvements in the error bounds in certain tensor product
(integration) problems.
\end{abstract}
\maketitle

\section{Introduction and main result}

\subsection{The Schur product theorem and nonzero lower bounds}

We begin with a few definitions. A \textit{positive semidefinite matrix}
is a complex Hermitian matrix with non-negative eigenvalues.
Denote the space of such $n \times n$ matrices by $\bp_n = \bp_n(\C)$.
Given integers $m,n \geq 1$, the \textit{Schur product}, or
\textit{entrywise product} of two (possibly rectangular) $m \times n$
complex matrices $A = (a_{jk}), B = (b_{jk})$ equals the $m \times n$
matrix $A \circ B$ with $(j,k)$ entry $a_{jk} b_{jk}$. 

A seminal result by Schur~\cite{Schur1911} asserts that if $M,N$ are
positive semidefinite matrices of the same size, then so is their
entrywise product $M \circ N$. This fundamental observation has had
numerous follow-ups and applications; perhaps the most relevant to the
present short note is the development of the entrywise calculus in matrix
analysis, with connections to numerous classical and modern works, both
theoretical and applied. (See e.g.~the two-part
survey~\cite{BGKP-survey1,BGKP-survey2}.) It also extends to positive
self-adjoint operators on Hilbert spaces.

The Schur product theorem is often phrased using the \textit{Loewner
ordering} on $\C^{n \times n}$ -- in which $M \geq N$ if $M - N \in
\bp_n = \bp_n(\C)$ -- in the following form:
\begin{equation}
M \geq {\bf 0}_{n \times n}, \ N \geq {\bf 0}_{n \times n}
\quad \implies \quad M \circ N \geq {\bf 0}_{n \times n}.
\end{equation}
This is a `qualitative' result, in that it provides a lower bound of
${\bf 0}_{n \times n}$ for $M \circ N$ for all $M,N \in \bp_n$. It is
natural to seek `quantitative' results, i.e., nonzero lower bounds. Here
are some known bounds: Fiedler's inequality~\cite{F} says $A \circ A^{-1}
\geq \Id_n$ if $A \in \bp_n$ is invertible. Two more examples, see
e.g.~\cite{FM,R}, are:
\begin{align}
\begin{aligned}
M \circ N \geq &\ \lambda_{\min}(N) (M \circ \Id_n), \ \textit{if } M \in
\bp_n \text{ is real and } N = N^T \in \R^{n \times n},\\
M \circ N \geq &\ \frac{1}{{\bf e}^T N^{-1} {\bf e}} M, \ \text{if }
M,N \in \bp_n \text{ and } \det(N) > 0.
\end{aligned}
\end{align}
Here and below, we use the following \textbf{notation} without further
reference.
\begin{itemize}
\item Given a fixed integer $n \geq 1$, let ${\bf e} = {\bf e}(n) :=
(1,\dots,1)^T \in \C^n$, and $E_n := {\bf e} {\bf e}^T \in \bp_n$.

\item We say that a matrix in $\bp_n$ is a \textit{real/complex
correlation matrix} if it has all diagonal entries $1$, and all entries
real/complex respectively.

\item Given a matrix $M_{n \times n}$ and a subset $J \subset \{ 1,
\dots, n \}$, let $M_{J \times J}$ denote the principal submatrix of $M$
corresponding to the rows and columns indexed by $J$; and let $d_M :=
(m_{11}, \dots, m_{nn})^T$.
\end{itemize}

This note concerns the recent paper~\cite{V}, in which Vyb\'iral showed a
new lower bound for all $M \circ \overline{M}$, where $M$ is a
correlation matrix:

\begin{theorem}[\cite{V}]\label{TV1}
If $n \geq 1$ and $M_{n \times n}$ is a real or complex correlation
matrix (so $\overline{M} = M^T$), then $M \circ \overline{M} \geq
\frac{1}{n} E_n$.
\end{theorem}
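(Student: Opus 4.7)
My plan is to give a short, conceptual proof that identifies the quadratic form $x^*(M \circ \overline{M})x$ with the squared Hilbert--Schmidt norm of a certain operator, and then deduces the bound from a single application of the Cauchy--Schwarz inequality in the Hilbert--Schmidt inner product.

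\medskip

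First, since $M \in \bp_n$ is a correlation matrix, write a Gram factorization $M = V^* V$, where $V = [v_1 \mid \cdots \mid v_n]$ is an $n \times n$ matrix whose columns $v_1, \dots, v_n \in \C^n$ are unit vectors (because $m_{jj} = \|v_j\|^2 = 1$). Then $m_{jk} = \langle v_k, v_j \rangle$, so
\[
(M \circ \overline{M})_{jk} \; = \; |\langle v_k, v_j \rangle|^2 \; = \; \tr(P_j P_k),
\]
where $P_j := v_j v_j^* \in \bp_n$ is the rank-one orthogonal projection onto $\C v_j$, which in particular satisfies $\tr(P_j) = 1$.

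\medskip

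Next, for an arbitrary test vector $x = (x_1,\dots,x_n)^T \in \C^n$, define
\[
A_x \; := \; \sum_{k=1}^{n} x_k P_k \; \in \; \C^{n \times n}.
\]
Since each $P_k$ is Hermitian, one has $A_x^* = \sum_j \overline{x_j} P_j$, and therefore
\[
x^* (M \circ \overline{M}) x \; = \; \sum_{j,k} \overline{x_j} x_k \tr(P_j P_k) \; = \; \tr(A_x^* A_x) \; = \; \|A_x\|_{\HS}^2.
\]
On the other hand, using $\tr(P_k)=1$,
\[
\tr(A_x) \; = \; \sum_{k=1}^{n} x_k \; = \; {\bf e}^T x.
\]

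\medskip

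The key step now is the Cauchy--Schwarz inequality in the Hilbert--Schmidt inner product $\langle X,Y \rangle_{\HS} = \tr(X^* Y)$, applied to the pair $(\Id_n, A_x)$:
\[
|\tr(A_x)|^2 \; = \; |\langle \Id_n, A_x \rangle_{\HS}|^2 \; \leq \; \|\Id_n\|_{\HS}^2 \, \|A_x\|_{\HS}^2 \; = \; n \, \|A_x\|_{\HS}^2.
\]
Combining the three displays gives
\[
x^* (M \circ \overline{M}) x \; = \; \|A_x\|_{\HS}^2 \; \geq \; \frac{1}{n} |{\bf e}^T x|^2 \; = \; \frac{1}{n} \, x^* E_n \, x,
\]
which is exactly the Loewner bound $M \circ \overline{M} \geq \tfrac{1}{n} E_n$.

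\medskip

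I do not anticipate a serious obstacle: once one spots that the diagonal-one condition forces the rank-one pieces $P_j$ to be genuine projections (so that $\tr(P_j)=1$), the identification $x^*(M \circ \overline{M}) x = \|A_x\|_{\HS}^2$ is essentially automatic, and the remainder is one line of Cauchy--Schwarz. The only mild subtlety is choosing the right factorization of $M$ and recognizing $|\langle v_k,v_j \rangle|^2$ as $\tr(P_j P_k)$, which is what makes the quadratic form a squared norm rather than merely a nonnegative expression; this is also the feature that should make the argument extend naturally to arbitrary $M,N \in \bp_n$ and to Hilbert--Schmidt operators, as promised in the abstract.
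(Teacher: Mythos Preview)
Your proof is correct and is essentially the specialization to this case of the paper's own proof of its main Theorem~\ref{Tmain0} (from which Theorem~\ref{TV1} is deduced). Both arguments identify the quadratic form $x^*(M\circ\overline M)x$ with a squared Hilbert--Schmidt norm --- your $A_x=\sum_k x_k v_kv_k^*$ is exactly the paper's $N=A^*D_u\overline B$ with $A=V^*$, $B=\overline A$ --- and then apply the tracial Cauchy--Schwarz inequality; you pair against $\Id_n$ (yielding the constant $1/n$), while the paper pairs against the projection $P$ onto $(\ker A)^\perp$, which is what produces the sharper constant $1/\rk(M)$ in the general theorem.
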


Theorem~\ref{TV1} is striking in its simplicity (and in that it seems to
have been undiscovered for more than a century after the Schur product
theorem~\cite{Schur1911}). There are no obvious upper bounds for the
left-hand side, while it is \textit{a priori} intriguing that there is a
nonzero lower bound.

Vyb\'iral provided a direct proof, in fact of a more general fact:

\begin{theorem}[\cite{V}]\label{TV2}
Given a matrix $M \in \C^{n \times n}$, let $d_M := (m_{11}, \dots,
m_{nn})^T$ be the vector consisting of its diagonal entries. Now if $M
\in \bp_n$, then $M \circ \overline{M} \geq \frac{1}{n} d_M d_M^T$.
\end{theorem}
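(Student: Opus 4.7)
The plan is to reinterpret both sides as Gram matrices (in the Hilbert--Schmidt inner product) and then reduce the inequality to a single-line Cauchy--Schwarz for the trace.

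First I would factor $M = A^*A$, writing the columns of $A$ as vectors $u_1, \dots, u_n \in \C^n$, so that $m_{jk} = u_j^* u_k$ and $m_{jj} = \|u_j\|^2$. Setting $P_j := u_j u_j^*$ (a rank-one PSD matrix), the crucial observation is that
\[
(M \circ \overline{M})_{jk} = |m_{jk}|^2 = (u_j^* u_k)(u_k^* u_j) = \tr(P_j P_k),
\qquad m_{jj} = \tr(P_j).
\]
Thus $M \circ \overline{M}$ is exactly the Gram matrix of $\{P_1,\dots,P_n\}$ inside the Hilbert--Schmidt space, and the vector $d_M$ consists of their Hilbert--Schmidt inner products with the identity, since $\tr(P_j) = \langle \Id_n, P_j \rangle_{\HS}$.

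Second, I would test the desired matrix inequality against an arbitrary $x \in \C^n$. Setting $Q := \sum_j x_j P_j$, the two quadratic forms become
\[
x^*(M \circ \overline{M}) x = \sum_{j,k} \overline{x_j} x_k \tr(P_j P_k) = \tr(Q^* Q) = \|Q\|_{\HS}^2,
\]
while
\[
x^*\bigl(\tfrac{1}{n} d_M d_M^T\bigr) x = \tfrac{1}{n}\Bigl|\sum_j x_j \tr(P_j)\Bigr|^2 = \tfrac{1}{n}|\tr Q|^2.
\]
So the inequality to prove is the clean scalar statement $\|Q\|_{\HS}^2 \geq \tfrac{1}{n}|\tr Q|^2$ for every $n \times n$ matrix $Q$.

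Third, this last statement is immediate from Cauchy--Schwarz in the Hilbert--Schmidt inner product applied to $\Id_n$ and $Q$:
\[
|\tr Q|^2 = |\langle \Id_n, Q\rangle_{\HS}|^2 \leq \|\Id_n\|_{\HS}^2 \cdot \|Q\|_{\HS}^2 = n\, \|Q\|_{\HS}^2.
\]
I do not anticipate a serious obstacle here; the only real step is the conceptual one of recognizing $M \circ \overline{M}$ as the HS-Gram matrix of the rank-one projections $P_j = u_j u_j^*$, which simultaneously explains why the natural test vector is $\Id_n$ and why the constant $\tfrac{1}{n}$ appears (it is $\|\Id_n\|_{\HS}^{-2}$). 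This viewpoint also suggests the right generalizations: allowing $N \neq \overline{M}$ corresponds to pairing two families $P_j, Q_k$, and passing to Hilbert--Schmidt operators corresponds to letting the $u_j$ live in an infinite-dimensional Hilbert space, both of which fit naturally into the same tracial Cauchy--Schwarz framework.
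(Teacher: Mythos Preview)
Your proof is correct and is essentially the same tracial Cauchy--Schwarz argument the paper uses: writing $M=A^*A$, your operator $Q=\sum_j x_j u_j u_j^* = A D_x A^*$ is (up to the $A\leftrightarrow A^*$ convention) exactly the paper's $N=A^* D_u \overline{B}$ specialized to $B=\overline{A}$, and your Cauchy--Schwarz against $\Id_n$ is the paper's Cauchy--Schwarz against the projection $P$ in the case where $P=\Id$. The only cosmetic difference is packaging: you phrase it as ``$M\circ\overline{M}$ is the HS-Gram matrix of the rank-one matrices $u_ju_j^*$,'' whereas the paper reaches the same trace via the identity $u^T(M\circ N)v=\tr(N^T D_u M D_v)$; the paper's version has the advantage that replacing $\Id_n$ by the projection $P$ of~\eqref{Eproj} immediately upgrades $1/n$ to the sharp $1/\min(\rk M,\rk N)$.
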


Vyb\'iral used these results to prove a conjecture of Novak~\cite{N} in
numerical integration (see Theorem~\ref{Tnovak}), with applications to
positive definite functions and in other areas. See~\cite{V} for details.

\subsection{The main result}

Following the above results, Vyb\'iral asked -- at the end of his
original 2019 preprint~\cite{V19} -- if Theorem~\ref{TV1} admits variants
(1)~for $M \circ N$ for $N \neq M, \overline{M}$; and
(2)~for higher powers of $M$.
He answered~(1) in his updated paper, as follows:

\begin{theorem}[\cite{V}]\label{TV3}\hfill
\begin{enumerate}
\item If $M = A A^*, N = B B^* \in \bp_n$ for $A,B \in \C^{n \times n}$,
then $M \circ N \geq \frac{1}{n} w w^*$, with $w := (A \circ B) {\bf e}$.
\item In particular, setting $B = {\bf e} {\bf e}^T = E_n$, we have $A
A^* \geq \frac{1}{n} (A {\bf e}) (A {\bf e})^*$.
\end{enumerate}
\end{theorem}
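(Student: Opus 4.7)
The plan is to reduce both parts to a single application of the scalar Cauchy--Schwarz inequality, by exploiting the given factorizations $M = AA^*$ and $N = BB^*$ to write $M \circ N$ as an explicit sum of $n^2$ rank-one positive semidefinite matrices.

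Concretely, for each pair $(\ell, m) \in \{1, \dots, n\}^2$ I would introduce the vector $c_{\ell m} \in \C^n$ with $j$-th entry $A_{j\ell} B_{jm}$. A direct expansion of matrix entries then yields
\[
M \circ N \;=\; \sum_{\ell, m = 1}^n c_{\ell m} c_{\ell m}^*, \qquad w \;=\; (A \circ B) {\bf e} \;=\; \sum_{\ell = 1}^n c_{\ell \ell},
\]
so that only the $n$ \emph{diagonal} vectors $c_{\ell\ell}$ contribute to $w$, while all $n^2$ of the vectors $c_{\ell m}$ appear in $M \circ N$. This asymmetry is the source of the factor $1/n$.

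For part~(1), I would fix an arbitrary $x \in \C^n$, set $z_{\ell m} := c_{\ell m}^* x$, and note that $x^*(M \circ N) x = \sum_{\ell, m} |z_{\ell m}|^2$ while $x^* w w^* x = |\sum_\ell z_{\ell\ell}|^2$. The scalar Cauchy--Schwarz bound
\[
\Bigl| \sum_{\ell=1}^n z_{\ell\ell} \Bigr|^2 \;\leq\; n \sum_{\ell=1}^n |z_{\ell\ell}|^2 \;\leq\; n \sum_{\ell, m = 1}^n |z_{\ell m}|^2
\]
then closes the argument, and since $x$ was arbitrary it yields the Loewner inequality in~(1).

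Part~(2) is obtained by the same mechanism applied directly to the columns $a_1, \dots, a_n$ of $A$: writing $AA^* = \sum_\ell a_\ell a_\ell^*$ and $A{\bf e} = \sum_\ell a_\ell$, testing against $x \in \C^n$ reduces the claim to the identical Cauchy--Schwarz step on the scalars $a_\ell^* x$. The only conceptual step in either part is recognising the factorization of $M \circ N$ displayed above; once in hand, each proof is a single line of Cauchy--Schwarz, so I do not anticipate any genuine obstacle.
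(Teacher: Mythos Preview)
Your proof is correct. The decomposition $M\circ N=\sum_{\ell,m}c_{\ell m}c_{\ell m}^{*}$ is exactly the one the paper records in its Remark ``Specializing to earlier results'' (there written as $\sum_{j,k}(v_j\circ w_k)(v_j\circ w_k)^{*}$ with $v_j,w_k$ the columns of $A,B$; your $c_{\ell m}$ is $v_\ell\circ w_m$). From this point the two arguments diverge slightly in packaging. The paper first drops the off-diagonal summands to obtain $AA^{*}\circ BB^{*}\geq (A\circ B)(A\circ B)^{*}$, observes that this makes parts~(1) and~(2) equivalent, and then deduces part~(2) as the special case $B=E_n/\sqrt{n}$ of its main Theorem~A, whose proof is a \emph{tracial} Cauchy--Schwarz inequality via the identity $u^{T}(M\circ N)v=\tr(N^{T}D_uMD_v)$. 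You instead test against a vector $x$ and apply the \emph{scalar} Cauchy--Schwarz bound $|\sum_\ell z_{\ell\ell}|^{2}\leq n\sum_\ell|z_{\ell\ell}|^{2}$ directly, which is more elementary and entirely self-contained for this particular statement. The trade-off is that the paper's tracial route, once set up, immediately delivers the sharper constant $1/\min(\rk M,\rk N)$ in place of $1/n$, which your argument (throwing away the cross terms $|z_{\ell m}|^{2}$, $\ell\neq m$, at the last step) does not recover.
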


\noindent Note, the first part implies Theorem~\ref{TV2} (whence
Theorem~\ref{TV1}) by setting $B = \overline{A}$. Thus,
Theorem~\ref{TV3}(1) is currently state-of-the-art.

The lower bound of $1/n$ poses a technical challenge to the functional
analyst: Theorem~\ref{TV3} cannot be extended to infinite-dimensional
Hilbert spaces to yield a nontrivial lower bound. It is thus natural to
ask (3)~whether there exists a function of $M,N$ (or of $A,B$) that can
improve the constant $1/n$ to a bound that remains nonzero in Hilbert
spaces.\medskip

The contributions of this short note are as follows:
\begin{itemize}
\item Our main result indeed provides an improved bound sought-for above,
so that it also extends to a nonzero lower bound in the Hilbert space
setting (see Section~\ref{Shilbert}).

\item We show this improved bound is tight, and strictly improves on the
state-of-the-art Theorem~\ref{TV3}. We also do not require $A,B$ to be
square matrices -- or even equi-dimensional.

\item The proof we provide is conceptual and `coordinate-free', in
contrast to previous direct and `computational' proofs of special cases.
(At the same time, our proof uses elementary arguments, whence is
self-contained.) In particular, we show that the results here and by
Vyb\'iral are all tracial Cauchy--Schwarz inequalities -- our proof also
explains the \textit{meaning} of our tight bound.
\end{itemize}

Here is the main result of this note.

\begin{utheorem}\label{Tmain0}
Given integers $n, a \geq 1$ and nonzero matrices $A,B \in \C^{n \times
a}$, we have the (rank $\leq 1$) lower bound:
\begin{equation}\label{Eineq0}
A A^* \circ B B^* \geq \ \frac{1}{\min(\rk(A A^*), \rk(B B^*))} \cdot
d_{A B^T} d_{A B^T}^*,
\end{equation}
and the choice of constant is best possible.
\end{utheorem}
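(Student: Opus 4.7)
\medskip

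\noindent\textbf{Proof proposal.}
The plan is to reduce the matrix inequality to a scalar tracial inequality, and then apply a Cauchy--Schwarz inequality in the Hilbert--Schmidt inner product, with a rank refinement. This is what the introduction hints at by ``tracial Cauchy--Schwarz''.

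First, I would test both sides against an arbitrary vector $v \in \C^n$, so the claim becomes
\[
v^* (A A^* \circ B B^*) v \ \geq \ \frac{1}{r} \, |v^* d_{A B^T}|^2,
\qquad r := \min(\rk(A A^*), \rk(B B^*)) = \min(\rk A, \rk B).
\]
The next (and main conceptual) step is to introduce the $a \times a$ matrix $\beta := A^T D_{\overline v} B$, where $D_{\overline v}$ is the diagonal $n \times n$ matrix with diagonal $\overline v$. A direct index computation, expanding $(AA^*)_{ij}(BB^*)_{ij}$ as a double sum over the columns of $A$ and $B$, shows
\[
v^* (A A^* \circ B B^*) v \ = \ \sum_{k,l=1}^{a} \bigl| (A^T D_{\overline v} B)_{kl} \bigr|^2 \ = \ \|\beta\|_{\mathrm{HS}}^2 \ = \ \tr(\beta^* \beta),
\]
while reading off the diagonal of $A B^T$ and pairing with $v$ gives $v^* d_{A B^T} = \tr(\beta)$. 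So the required inequality reduces to
\[
\tr(\beta^* \beta) \ \geq \ \frac{1}{r} \, |\tr(\beta)|^2.
\]

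The key observation is now the rank bound $\rk(\beta) \leq \min(\rk A, \rk B) = r$, which is immediate from $\beta = A^T D_{\overline v} B$. Letting $P$ be the orthogonal projection onto (any subspace of dimension $r$ containing) the column space of $\beta$, we have $P \beta = \beta$, so $\tr(\beta) = \tr(P \beta) = \langle P, \beta\rangle_{\mathrm{HS}}$. The Cauchy--Schwarz inequality in the Hilbert--Schmidt inner product then yields
\[
|\tr(\beta)|^2 \ = \ |\langle P, \beta\rangle_{\mathrm{HS}}|^2 \ \leq \ \|P\|_{\mathrm{HS}}^2 \cdot \|\beta\|_{\mathrm{HS}}^2 \ = \ \rk(P) \cdot \tr(\beta^* \beta) \ = \ r \cdot \tr(\beta^* \beta),
\]
which is exactly the required bound. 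This also transparently explains the appearance of $\min(\rk A, \rk B)$: it is precisely the dimension of the ambient space into which Cauchy--Schwarz forces $\beta$.

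For tightness, I would exhibit an equality case. Taking $A = B = \mathrm{Id}_n$ (viewed as $n \times n$, with $a = n$), one has $A A^* \circ B B^* = \mathrm{Id}_n$, $d_{A B^T} = \mathbf{e}$, and $r = n$; the inequality becomes $\mathrm{Id}_n \geq \tfrac{1}{n} E_n$, with equality on the eigenvector $\mathbf{e}$. More generally, taking $A = B$ to be a rank-$r$ matrix of the form $[\mathrm{Id}_r \; 0]^T$ (appropriately sized) produces equality with constant $1/r$. I expect the main step is the conceptual one -- spotting the auxiliary matrix $\beta = A^T D_{\overline v} B$ that simultaneously encodes $v^*(AA^* \circ BB^*)v$ as $\|\beta\|_{\mathrm{HS}}^2$ and $v^* d_{AB^T}$ as $\tr(\beta)$; once this is in place, both the rank bound and Cauchy--Schwarz are routine, and the sharpness follows from the equality case in Cauchy--Schwarz (which demands $\beta$ be a scalar multiple of $P$).
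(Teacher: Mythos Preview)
Your proof is correct and follows essentially the same route as the paper: rewrite $v^*(AA^*\circ BB^*)v$ as the Hilbert--Schmidt norm squared of an $a\times a$ matrix (the paper uses $N := A^* D_v \overline{B}$, the conjugate of your $\beta$), identify $v^* d_{AB^T}$ as its trace, and apply Cauchy--Schwarz in the tracial inner product against a projection of rank $\leq r$. The only minor difference is that the paper's projection $P = \mathrm{proj}_{(\ker A)^\perp}|_{\mathrm{im}(B^T)}$ is chosen independently of the test vector (using $APB^T = AB^T$ to recover the trace), whereas your $P$ onto the column space of $\beta$ varies with $v$; both choices work, and your tightness examples match the paper's block-diagonal ones in spirit.
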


\noindent (In fact we do not require $A,B$ to have the same number of
columns; see Corollary~\ref{Cmain0} below.) Before proving this theorem,
we discuss some special cases, beginning with the solution to an open
problem.

Theorem~\ref{Tmain0} finds an application in numerical integration (in
the spirit of Vyb\'iral's original result~\cite{V} being recently applied
to resolve Novak's conjecture~\cite{N}). Specifically, in the recent work
by Hinrichs--Krieg--Novak--Vyb\'iral~\cite{HKNV}, the authors prove two
results (Theorems~$15$ and~$16$ in \textit{loc.\ cit.}); the latter
states that given integers $n, D \geq 1$, and real matrices $A, B \in
\R^{n \times D}$ with $A A^T = B B^T$ of rank $r > 0$, we have:
\begin{equation}
A A^T \circ B B^T \geq \frac{1}{2r} d_{A B^T} d_{A B^T}^T.
\end{equation}
The authors then ask (see Open Problem~$1$ in~\cite{HKNV}) if the
constant $1/(2r)$ can be improved to $1/r$; this would lead to improved
error bounds in certain tensor product integration problems. This Open
Problem -- as well as both of their aforementioned theorems -- are
immediate consequences of Theorem~\ref{Tmain0}. For instance, in the
special case $A A^T = B B^T$, Theorem~\ref{Tmain0} above answers the Open
Problem (in particular, improving on~\cite[Theorem 16]{HKNV}):

\begin{cor}
Given arbitrary integers $n,D \geq 1$ and nonzero matrices $A,B \in \R^{n
\times D}$, if $A A^T = B B^T = M$ then
\[
A A^T \circ B B^T \geq \frac{1}{\rk(M)} d_{A B^T} d_{A B^T}^T.
\]
\end{cor}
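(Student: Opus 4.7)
The plan is to simply specialize the Main Theorem (Theorem~\ref{Tmain0}) to the setting of the corollary; there is essentially nothing to prove beyond matching notation, since all the work has been done in Theorem~\ref{Tmain0}.

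First I would observe that because $A$ and $B$ are real, the adjoints $A^{*}$ and $B^{*}$ coincide with the transposes $A^{T}$ and $B^{T}$; in particular the Hermitian products $AA^{*}$ and $BB^{*}$ appearing in Theorem~\ref{Tmain0} are exactly the real matrices $AA^{T}$ and $BB^{T}$, and the rank-one outer product $d_{AB^{T}} d_{AB^{T}}^{*}$ on the right-hand side of \eqref{Eineq0} becomes the real rank-one matrix $d_{AB^{T}} d_{AB^{T}}^{T}$.

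Next, the hypothesis $AA^{T} = BB^{T} = M$ immediately yields $\rk(AA^{T}) = \rk(BB^{T}) = \rk(M)$, and hence the denominator $\min(\rk(AA^{*}), \rk(BB^{*}))$ that appears in Theorem~\ref{Tmain0} is exactly $\rk(M)$. Also, $M$ is nonzero (since both $A$ and $B$ are nonzero), so $\rk(M) \geq 1$ and the bound is well-defined. Substituting into \eqref{Eineq0} gives precisely the claimed inequality
\[
AA^{T} \circ BB^{T} \ \geq \ \frac{1}{\rk(M)}\, d_{AB^{T}} d_{AB^{T}}^{T}.
\]

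The only thing that might appear to be an obstacle is the rectangularity concern: Theorem~\ref{Tmain0} requires $A$ and $B$ to have a common column count, which here is $D$, so this is automatic. (Even otherwise, the extension to unequal column counts is remarked upon right after Theorem~\ref{Tmain0}.) Thus the corollary requires nothing beyond the literal statement of Theorem~\ref{Tmain0} together with the identifications $A^{*}=A^{T}$, $B^{*}=B^{T}$ enabled by the reality of $A$ and $B$.
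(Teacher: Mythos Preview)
Your proposal is correct and matches the paper's approach exactly: the paper likewise presents this corollary as an immediate specialization of Theorem~\ref{Tmain0} to the case $AA^T = BB^T = M$ (with real $A,B$), giving no separate proof beyond that observation.
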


We next discuss additional special cases of our main result, which were
previously proved in the literature.

\begin{remark}[Specializing to earlier results]
Theorem~\ref{Tmain0} extends and unifies the preceding results above. It
suffices to deduce the `state-of-the-art' Theorem~\ref{TV3}(1). Letting
$v_j, w_k$ denote the columns of $A_{n \times n}, B_{n \times n}$
respectively, we have $A A^* = \sum_j v_j v_j^*$ and $B B^* = \sum_k w_k
w_k^*$. Hence
\[
A A^* \circ B B^* = \sum_{j,k = 1}^n (v_j v_j^*) \circ (w_k w_k^*) \geq
\sum_{j=1}^n (v_j \circ w_j) (v_j \circ w_j)^* = (A \circ B) (A \circ
B)^*.
\]
This shows that the two assertions in Theorem~\ref{TV3} are equivalent;
and setting $a=n, B = E_n / \sqrt{n}$ in Theorem~\ref{Tmain0} yields
Theorem~\ref{TV3}(2).
\end{remark}

\begin{remark}
Another special case that Vyb\'iral has separately communicated to
us~\cite{V20}, again holds for square matrix decompositions:
\begin{equation}\label{Eineq4}
A A^* \circ B B^* \geq \frac{1}{\max(\rk(A A^*), \rk(B B^*))} d_{A B^T}
d^*_{A B^T}, \qquad \forall A,B \in \C^{n \times n}.
\end{equation}
More precisely, Vyb\'iral mentioned that given any two positive matrices
$M,N \in \bp_n$, one has the lower bound~\eqref{Eineq4} for every pair of
decompositions $M = A A^*, N = B B^*$ for square matrices $A,B \in \C^{n
\times n}$. Notice that:
(a)~this holds only in the special case $a=n$ of Theorem~\ref{Tmain0};
(b)~the bound in~\eqref{Eineq4} is also not tight, as the coefficient of
$1/\max$ can be improved to $1/\min$ in
Theorem~\ref{Tmain0}; and
(c)~it is not clear if this statement implies Theorem~\ref{TV3}, or
conversely. Our main result, Theorem~\ref{Tmain0}, clearly unifies and
strengthens all of these variants.
\end{remark}

Having discussed the myriad special cases of the theorem, here is a proof
(that is self-contained on the one hand, and on the other, explains the
tight lower bound):

\begin{proof}[Proof of Theorem~\ref{Tmain0}]
The key identity needed to prove~\eqref{Eineq0} is algebraic: given any
square $n \times n$ matrices $M,N$ and vectors $u,v$ with $n$ coordinates
(over a unital commutative ring),
\begin{equation}\label{Eidentity0}
u^T (M \circ N) v = \tr ( N^T D_u M D_v ),
\end{equation}
where $D_u$ for a vector $u \in \C^n$ is the diagonal matrix with $(j,j)$
entry $u_j$.
Thus, pre- and post-multiplying the left-hand side of~\eqref{Eineq0} by
$u^*, u$ respectively, we compute:
\[
u^* (A A^* \circ B B^*) u = \tr( \overline{B} B^T D_{\overline{u}} A A^*
D_u ) = \tr( N^* N), \quad \text{where} \quad N := A^* D_u \overline{B}.
\]

Consider the inner product on $\C^{a \times a}$, given by $\tangle{X,Y}
:= \tr(X^* Y)$, and define the projection
\begin{equation}\label{Eproj}
P := {\rm proj}_{(\ker A)^\perp}|_{{\rm im}(B^T)};
\end{equation}
thus $P \in \C^{a \times a}$. We compute:
\begin{align*}
\tangle{P,P} \leq \min(\dim (\ker A)^\perp, \dim {\rm im}(B^T)) = &\
\min(\rk(A^*), \rk(B^*))\\
= &\ \min(\rk(A A^*), \rk(B B^*)).
\end{align*}
Hence by the Cauchy--Schwarz inequality (for this tracial inner
product),
\begin{align*}
u^* (A A^* \circ B B^*) u = \tangle{N,N} \geq &\
\frac{|\tangle{N,P}|^2}{\tangle{P,P}} = \frac{|\tr( A P B^T
D_{\overline{u}})|^2}{\tangle{P,P}} = \frac{|u^* d_{A P
B^T}|^2}{\tangle{P,P}}\\
\geq &\ \frac{1}{\min(\rk(A A^*), \rk(B B^*))} u^* d_{A P B^T} d^*_{A P
B^T} u.
\end{align*}
But this holds for all vectors $u$. This shows~\eqref{Eineq0} where $d_{A
B^T}$ is replaced by $d_{A P B^T}$; but in fact $A P B^T = A B^T$ by
choice of $P$.

Finally, we show the tightness of the bound $1 / \min( \rk(A A^*), \rk(B
B^*))$ (e.g. over $1/\max$). Choose integers $1 \leq r$ with $r,s \leq
n$, and complex block diagonal matrices
\[
A_{n \times a} := \begin{pmatrix} D_{r \times r} & 0 \\ 0 & 0
\end{pmatrix}, \quad
B_{n \times a} := \begin{pmatrix} D'_{s \times s} & 0 \\ 0 & 0
\end{pmatrix},
\]
with both $D, D'$ nonsingular. Then $P := \begin{pmatrix} \Id_{\min(r,s)}
& 0 \\ 0 & 0 \end{pmatrix}$, and the bound of $1 / \min(r,s)$ is indeed
tight, as can be verified using the Cauchy--Schwarz identity.
\end{proof}

We end this part with additional remarks, beginning by attaining equality
in \eqref{Eineq0}.

\begin{example}\label{Erankone}
Suppose $A = u, B = v$ are nonzero vectors in $\C^n$. Then~\eqref{Eineq0}
says:
\[
u u^* \circ v v^* \geq d_{u v^T} d^*_{u v^T} = (u \circ v) (u \circ v)^*.
\]
Thus, the inequality~\eqref{Eineq0} reduces to an equality for rank-one
matrices $A A^*, B B^*$.
\end{example}

\begin{remark}
Another way to consider Theorem~\ref{Tmain0} is to start with matrices
$M, N \in \bp_n(\C)$ and then obtain the bound~\eqref{Eineq0} for every
decomposition $M = A A^*, N = B B^*$. In this case, it is clear that the
constant 
\[
\gamma = \min(\rk M_{J \times J}, \rk N_{J \times J})^{-1}
\]
does not change; but the rank-one lower bound can indeed change. Even if
one runs over decompositions in terms of \textit{square} matrices $A,B$
(to dispense with the role of $P$), and assumes $C = \Id_J$, it would be
interesting to obtain some understanding of the possible rank-one
matrices obtained as lower bounds.

This is also linked to the possibility of obtaining higher-rank lower
bounds for $A A^* \circ B B^*$. One way to do so is to realize that the
left-hand side of~\eqref{Eineq0} is bi-additive in $(A A^*, B B^*)$, so
one can decompose both $A A^*$ and $B B^*$ as sums of lower-rank matrices
and obtain rank-one lower bounds for each pair of lower-rank matrices.
Example~\ref{Erankone} is relevant here: it shows that if one writes $A
A^*, B B^*$ as sums of rank-one matrices, then each corresponding
inequality is an equality, and adding these yields the unique best lower
bound of $A A^* \circ B B^*$.
\end{remark}

\subsection{Refinements using coordinates}

We now present several refinements of Theorem~\ref{Tmain0}.
The first is \textit{a priori} more general, but in fact equivalent:

\begin{cor}\label{Cmain0}
Given integers $n, a, b \geq 1$ and nonzero matrices $A \in \C^{n \times
a}, \ B \in \C^{n \times b}$, we have the (rank $\leq 1$) lower bound:
\begin{equation}
A A^* \circ B B^* \geq \ \frac{1}{\min(\rk(A A^*), \rk(B B^*))} \cdot
d_{A_0 B_0^T} d_{A_0 B_0^T}^*,
\end{equation}
where $A_0$ appends $p + \max(a,b) - a$ zero-columns to the right of $A$,
and $B_0$ appends $p + \max(a,b) - b$ zero-columns to the right of $B$,
for some integer $p \geq 0$. Moreover, the choice of constant is best
possible.
\end{cor}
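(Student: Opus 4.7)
The plan is to deduce the corollary directly from Theorem~\ref{Tmain0} by a padding argument: the whole point of appending zero columns is that none of the quantities appearing in the inequality are affected by this operation, so we can always reduce to the square/equi-dimensional case already handled.

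Concretely, I would first observe that $A_0$ and $B_0$ both lie in $\C^{n \times (p + \max(a,b))}$ by construction, so that Theorem~\ref{Tmain0} applies to the pair $(A_0, B_0)$. Next I would check the three identities that make the reduction work: (i)~$A_0 A_0^* = A A^*$ and $B_0 B_0^* = B B^*$, since appending zero columns to a matrix does not alter the Gram product $X X^*$; in particular, $\rk(A_0 A_0^*) = \rk(A A^*)$ and $\rk(B_0 B_0^*) = \rk(B B^*)$; (ii)~writing $B_0^T$ in block form as $B^T$ stacked over zero rows, the matrix product $A_0 B_0^T$ collapses to $A B^T$, so that $d_{A_0 B_0^T} = d_{A B^T}$ (meaning the lower bound in the corollary is the \emph{same} rank-one matrix as that in Theorem~\ref{Tmain0} applied to $(A_0, B_0)$). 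Together, these identities show that the inequality in the corollary is literally the inequality~\eqref{Eineq0} applied to the equi-dimensional pair $(A_0, B_0)$, so Theorem~\ref{Tmain0} finishes the job.

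For the tightness assertion, the extremal block-diagonal matrices exhibited at the end of the proof of Theorem~\ref{Tmain0} already have $a = b$, so one can take $A_0 = A, B_0 = B$ with $p = 0$ and the same examples witness optimality of the constant $1/\min(\rk(AA^*), \rk(BB^*))$ in the present more general setting.

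There is really no substantive obstacle here; the only thing to watch is the bookkeeping of zero columns, and in particular verifying that $A_0 B_0^T$ genuinely equals $A B^T$ (as opposed to some permuted or extended version). The corollary is therefore more general only in appearance: allowing $a \neq b$ is absorbed by the padding, and allowing the extra parameter $p$ is absorbed similarly without cost, since extra zero columns contribute nothing to any Gram or cross-product.
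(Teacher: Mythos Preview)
Your approach is essentially identical to the paper's: apply Theorem~\ref{Tmain0} to the equi-dimensional pair $(A_0,B_0)$ and then use $A_0A_0^* = AA^*$ and $B_0B_0^* = BB^*$ to match the left-hand side and the rank constants. One minor correction: your step~(ii) is both unnecessary and ill-posed, since when $a\neq b$ the product $AB^T$ is not defined (an $n\times a$ matrix cannot be multiplied by a $b\times n$ matrix); fortunately the corollary is already stated in terms of $d_{A_0B_0^T}$, so only your observation~(i) is actually needed --- exactly as in the paper's one-line argument.
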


\begin{proof}
Clearly this result implies Theorem~\ref{Tmain0} by setting $b=a$ and
$p=0$. Conversely, apply Theorem~\ref{Tmain0} to $A_0, B_0$, and use $p +
\max(a,b)$ in place of $a$, to obtain:
\[
A_0 A_0^* \circ B_0 B_0^* \geq \ \frac{1}{\min(\rk(A_0 A_0^*), \rk(B_0
B_0^*))} \cdot d_{A_0 B_0^T} d_{A_0 B_0^T}^*.
\]
Now notice that $A_0 A_0^* = A A^*$ and $B_0 B_0^* = B B^*$.
\end{proof}

The next result refines Theorem~\ref{Tmain0} in the following sense:
suppose the matrix $M$ has nonzero entries only in the $J \times J$
coordinates (for a nonempty subset $J \subset \{ 1, \dots, n \}$). Then
the bound can in fact be improved:

\begin{theorem}\label{Tmain2}
Given integers $n, k \geq 1$ and a complex matrix $C_{k \times n}$, let
$J \subset \{ 1, \dots, n \}$ index the nonzero columns of $C$. Then for
all integers $a \geq 1$ and matrices $A \in \C^{n \times a}, \ B \in
\C^{n \times a}$ such that $(A A^*)_{J \times J}, (B B^*)_{J \times J}$
are nonzero, we have the (rank $\leq 1$) lower bound:
\begin{equation}\label{Eineq3}
C (A A^* \circ B B^*) C^* \ \geq \ \gamma(A,B,J) \cdot C d_{A B^T} d_{A
B^T}^* C^*,
\end{equation}
where the following choice of scalar $\gamma(A,B,J)$ is best possible:
\begin{equation}\label{Egamma}
\gamma(A,B,J) := \frac{1}{\min(\rk(A A^*)_{J \times J}, \rk(B B^*)_{J
\times J})}.
\end{equation}
\end{theorem}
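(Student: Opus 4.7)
The plan is to reduce Theorem~\ref{Tmain2} to Theorem~\ref{Tmain0} by restricting all the data to the rows and columns indexed by $J$. The key observation is that since $C$ has zero columns outside $J$, the map $X \mapsto C X C^*$ only sees the principal submatrix $X_{J \times J}$, so one can safely pass to a lower-dimensional problem in which the preceding theorem already supplies the sharp constant.

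Concretely, I would write $C_J \in \C^{k \times |J|}$ for the column restriction of $C$ to its nonzero columns, and $A_J, B_J$ for the row restrictions of $A, B$ to rows indexed by $J$. For any $n \times n$ matrix $M$ one has $C M C^* = C_J M_{J \times J} C_J^*$. Applying this to $M = A A^* \circ B B^*$, together with the routine identifications $(A A^*)_{J \times J} = A_J A_J^*$ and $(B B^*)_{J \times J} = B_J B_J^*$, yields
\[
C (A A^* \circ B B^*) C^* \ = \ C_J \bigl( A_J A_J^* \circ B_J B_J^* \bigr) C_J^*.
\]
By hypothesis both $A_J A_J^*$ and $B_J B_J^*$ are nonzero, so Theorem~\ref{Tmain0} applies to the pair $(A_J, B_J)$ and gives
\[
A_J A_J^* \circ B_J B_J^* \ \geq \ \gamma(A,B,J) \cdot d_{A_J B_J^T} d_{A_J B_J^T}^*.
\]
Conjugating both sides by $C_J$ preserves the Loewner order, and reduces the problem to identifying $C_J d_{A_J B_J^T}$ with $C d_{A B^T}$.

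This last identity is elementary: the $j$th entry of $d_{A_J B_J^T}$ equals $(A B^T)_{jj}$ for each $j \in J$, so $d_{A_J B_J^T}$ is simply the $J$-restriction of $d_{A B^T}$, and multiplying by $C_J$ is the same as multiplying by $C$ because $C$'s remaining columns vanish. For the tightness claim, I would adapt the block-diagonal extremal example from the proof of Theorem~\ref{Tmain0}: place invertible blocks of sizes $r$ and $s$ (with $r,s \leq |J|$) inside the $J$-rows of $A$ and $B$, and choose $C$ whose nonzero columns index those blocks; the same Cauchy--Schwarz equality analysis then shows that the constant $1/\min(r,s)$ cannot be improved. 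I do not expect any substantive obstacle here beyond the bookkeeping of the restriction; the conceptual work has already been done in Theorem~\ref{Tmain0}, and the refinement in Theorem~\ref{Tmain2} is essentially the observation that $J$ already captures everything $C$ can see.
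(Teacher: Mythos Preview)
Your proposal is correct and follows essentially the same route as the paper: both reduce to Theorem~\ref{Tmain0} by observing that $C$ only sees the $J\times J$ principal submatrix, so one may replace $(C,A,B)$ by their restrictions to the $J$-indexed rows/columns and then invoke the already-proved sharp bound. The paper phrases this restriction via the diagonal idempotent $\Id_J$ (writing $C = C\,\Id_J$ and passing to $(\Id_J A,\Id_J B)$), while you write out $C_J, A_J, B_J$ explicitly, but the content and the tightness argument are the same.
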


\noindent Clearly, this implies Theorem~\ref{Tmain0} by setting $k=n$ and
$C = {\rm Id}_n$, so that $J = \{ 1, \dots, n \}$. However, it is
essentially also implied by it, as the following proof reveals.

\begin{proof}
A preliminary observation is that if $(A A^*)_{J \times J} = 0$ then
$\Id_J (A A^*) \Id_J = 0$, where $\Id_J \in \bp_n$ has diagonal entries
${\bf 1}_{i \in J}$. But then the submatrix $A_{J \times \{ 1, \dots, a
\}} = 0$, whence
\[
C d_{A B^T} = C \Id_J d_{A B^T} = 0.
\]
Thus the matrices on both sides of~\eqref{Eineq3} are zero, and so the
coefficient is irrelevant. The same conclusion is obtained by a similar
argument if $(B B^*)_{J \times J} = 0$.

We now prove~\eqref{Eineq3}. First observe that $C = C \Id_J$, so
that~\eqref{Eineq3} for $(C,A,B)$ follows from~\eqref{Eineq3} for $(C =
\Id_J, A, B)$. But this is precisely~\eqref{Eineq3} for the matrices
$(\Id_J, \Id_J A, \Id_J B)$. In other words, by restricting to the $J
\times J$ principal submatrices on both sides, we may assume without loss
of generality that $J = \{ 1, \dots, n \}$ and $C = \Id_n$; the
hypotheses imply $A,B$ are nonzero. This is precisely
Theorem~\ref{Tmain0}.
\end{proof}

We conclude this section by observing that~\eqref{Eineq3} can be extended
to Schur products of any number of positive matrices. Here are two sample
results:

\begin{cor}
Let $m,n,l \geq 1$ and matrices $M_1, \dots, M_m \in \bp_n$. Given a
partition of $\{ 1, \dots, m \}$ into subsets $J_1 \sqcup \cdots \sqcup
J_{2l}$, let
\[
M'_j := \circ_{i \in J_j} M_i, \qquad 1 \leq j \leq 2l.
\]
Now if $M'_j = A_j A_j^*$ for all $j \geq 1$, with each $A_j$ square and
nonzero, then we have the (rank $\leq 1$) lower bound:
\[
M'_1 \circ \cdots \circ M'_{2l} \geq \prod_{j=1}^k
\frac{1}{\min(\rk(M'_j), \rk(M'_{j+l}))} {\bf w} {\bf w}^*, \qquad
\text{where } {\bf w} := \circ_{j=1}^l d_{A_j A^T_{j+l}}.
\]
\end{cor}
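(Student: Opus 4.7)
The plan is to pair up the $2l$ matrices $M'_j$ into $l$ disjoint pairs $(M'_j, M'_{j+l})$ for $1 \leq j \leq l$, apply Theorem~\ref{Tmain0} to each pair to obtain a rank-one lower bound, and then combine these $l$ bounds via the Schur product. Commutativity and associativity of $\circ$ let me regroup
\[
M'_1 \circ \cdots \circ M'_{2l} = (M'_1 \circ M'_{l+1}) \circ (M'_2 \circ M'_{l+2}) \circ \cdots \circ (M'_l \circ M'_{2l}),
\]
and for each pair Theorem~\ref{Tmain0} (with $a = n$, since each $A_j$ is $n \times n$) gives
\[
M'_j \circ M'_{j+l} \geq \frac{1}{\min(\rk M'_j, \rk M'_{j+l})}\, d_{A_j A_{j+l}^T}\, d_{A_j A_{j+l}^T}^*.
\]

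The next step is to establish that the Schur product is monotone in the Loewner order: if $X_1, X_2, Y_1, Y_2 \in \bp_n$ satisfy $X_i \geq Y_i$ for $i = 1,2$, then $X_1 \circ X_2 \geq Y_1 \circ Y_2$. This follows at once from the telescoping identity
\[
X_1 \circ X_2 - Y_1 \circ Y_2 = (X_1 - Y_1) \circ X_2 + Y_1 \circ (X_2 - Y_2),
\]
whose right-hand side is a sum of Schur products of positive semidefinite matrices, hence positive semidefinite by Schur's theorem. Iterating this $l-1$ times yields
\[
\circ_{j=1}^l (M'_j \circ M'_{j+l}) \ \geq\ \circ_{j=1}^l \left( \frac{1}{\min(\rk M'_j, \rk M'_{j+l})}\, d_{A_j A_{j+l}^T}\, d_{A_j A_{j+l}^T}^* \right).
\]

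Finally, I would pull the scalar factors out of the entrywise product and invoke the elementary rank-one identity
\[
\circ_{j=1}^l (u_j u_j^*) = \bigl( \circ_{j=1}^l u_j \bigr)\bigl( \circ_{j=1}^l u_j \bigr)^*,
\]
verified by comparing $(p,q)$-entries on both sides. Applied to $u_j := d_{A_j A_{j+l}^T}$, this exactly produces the asserted bound with ${\bf w} = \circ_{j=1}^l d_{A_j A_{j+l}^T}$ and the stated product of reciprocal ranks.

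I do not anticipate a serious obstacle: the genuine content sits inside Theorem~\ref{Tmain0}, while the Loewner-monotonicity of $\circ$ and the rank-one identity are standard elementary consequences of Schur's theorem. The only mild subtlety is that one must use the same pairing scheme to match the rank-one factors $d_{A_j A_{j+l}^T}$ appearing in ${\bf w}$; this is purely bookkeeping, dictated by how the partition $J_1 \sqcup \cdots \sqcup J_{2l}$ indexes the factorizations $M'_j = A_j A_j^*$.
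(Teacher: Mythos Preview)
Your proposal is correct and follows essentially the same route as the paper: apply Theorem~\ref{Tmain0} to each pair $(M'_j, M'_{j+l})$, then combine via the Loewner monotonicity of the Schur product (which the paper states in one line as $A \geq B, A' \geq B' \Rightarrow A \circ A' \geq B \circ A' \geq B \circ B'$). Your telescoping justification and the rank-one identity $\circ_j (u_j u_j^*) = (\circ_j u_j)(\circ_j u_j)^*$ are exactly the ingredients the paper leaves implicit.
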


While this result implies~\eqref{Eineq3} for $l=1$, $n=k$, and $C =
\Id_n$, it is also implied by it, via the `monotonicity' of the Schur
product: if $A \geq B$ and $A' \geq B'$, then $A \circ A' \geq B \circ A'
\geq B \circ B'$.

\begin{theorem}\label{Tmain}
Given a vector $u = (u_1, \dots, u_n)^T \in \C^n$, let $D_u$ denote the
diagonal matrix whose diagonal entries are the coordinates $u_1, \dots,
u_n$ of $u$; and let $J(u) \subset \{ 1, \dots, n \}$ denote the nonzero
coordinates of $u$, i.e.~$\{ j : 1 \leq j \leq n, \ u_j \neq 0 \}$.

Now let $k \geq 1$, and fix vectors $u_1, y_1, \dots, u_k, y_k \in \C^n$
such that ${\bf w} := (u_1 \circ y_1) \circ \cdots \circ (u_k \circ y_k)$
is nonzero. Then we have the (rank $\leq 1$) lower bound:
for all matrices $M_1, \dots, M_k \in \bp_n$,
\begin{align}\label{Eineq}
\begin{aligned}
&\ \left( D_{u_1} M_1 D^*_{u_1} \circ D_{y_1} \overline{M}_1 D^*_{y_1}
\right) \circ \cdots \circ \left( D_{u_k} M_k D^*_{u_k} \circ D_{y_k}
\overline{M}_k D^*_{y_k} \right)\\
\geq &\ \frac{1}{\rk(M_{J({\bf w}) \times J({\bf w})})}
({\bf w} \circ d_{M_1} \circ \cdots \circ d_{M_k}) 
({\bf w} \circ d_{M_1} \circ \cdots \circ d_{M_k})^*,
\end{aligned}
\end{align}
where $M := M_1 \circ \cdots \circ M_k$. Note, if the principal submatrix
$M_{J({\bf w}) \times J({\bf w})} = {\bf 0}$ then ${\bf w} \circ d_{M_1}
\circ \cdots \circ d_{M_k}$ is also zero, so the coefficient is
irrelevant.

Moreover, the coefficient $\frac{1}{\rk(M_{J({\bf w}) \times J({\bf
w})})}$ is best possible for all $u_j, y_j \in \C^n$ for which ${\bf w}
\neq 0$, and all $M_1, \dots, M_k$ for which $M_{J({\bf w}) \times J({\bf
w})} \neq {\bf 0}$.
\end{theorem}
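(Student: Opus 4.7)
The plan is to reduce the desired inequality to a single application of Theorem~\ref{Tmain0}, via two simplifications: first, collapsing all of the Schur factors into one; then, restricting to the $J(\mathbf{w}) \times J(\mathbf{w})$ principal block, where both sides are supported and where the diagonal scalings become rank-preserving.

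For the first step, I would compute entry-by-entry: since $(D_{u_s} M_s D_{u_s}^*)_{ij} = u_{s,i}\overline{u_{s,j}} (M_s)_{ij}$, taking the Schur product across $s = 1, \dots, k$ gives the identity
\[
(D_{u_1} M_1 D_{u_1}^*) \circ \cdots \circ (D_{u_k} M_k D_{u_k}^*) = D_u M D_u^*, \qquad u := u_1 \circ \cdots \circ u_k,
\]
where $M := M_1 \circ \cdots \circ M_k \in \bp_n$ by the Schur product theorem. Similarly the $y$-factors collapse to $D_y \overline{M} D_y^*$ with $y := y_1 \circ \cdots \circ y_k$. Since $\mathbf{w} = u \circ y$ and $d_M = d_{M_1} \circ \cdots \circ d_{M_k}$, the inequality~\eqref{Eineq} becomes
\[
D_u M D_u^* \circ D_y \overline{M} D_y^* \ \geq\ \frac{1}{\rk(M_{J(\mathbf{w}) \times J(\mathbf{w})})} (\mathbf{w} \circ d_M)(\mathbf{w} \circ d_M)^*.
\]

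For the second step, set $J := J(\mathbf{w}) = J(u) \cap J(y)$. Each of $D_u M D_u^*$, $D_y \overline{M} D_y^*$, and $\mathbf{w} \circ d_M$ is supported on $J \times J$ (respectively $J$), so it suffices to prove the $J \times J$ restriction of the above inequality. Writing $M = A A^*$ and $A_0 := A_{J \times \cdot}$ (so $M_{J \times J} = A_0 A_0^*$), and noting that $u|_J, y|_J$ have no zero coordinates, I would apply Theorem~\ref{Tmain0} to the pair of matrices $(D_{u|_J} A_0,\ D_{y|_J} \overline{A_0})$. The invertibility of $D_{u|_J}, D_{y|_J}$ forces
\[
\rk\bigl((D_{u|_J} A_0)(D_{u|_J} A_0)^*\bigr) = \rk(M_{J \times J}) = \rk\bigl((D_{y|_J} \overline{A_0})(D_{y|_J} \overline{A_0})^*\bigr),
\]
so the $\min$ appearing in Theorem~\ref{Tmain0} collapses to $\rk(M_{J(\mathbf{w}) \times J(\mathbf{w})})$. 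The rank-one vector delivered by Theorem~\ref{Tmain0} is then
\[
d_{D_{u|_J} A_0 (D_{y|_J} \overline{A_0})^T} = d_{D_{u|_J} M_{J \times J} D_{y|_J}} = (u|_J \circ y|_J) \circ d_{M_{J \times J}} = (\mathbf{w} \circ d_M)\big|_J,
\]
which is the $J$-restriction of the right-hand side. Tightness then follows from the tightness assertion of Theorem~\ref{Tmain0} using block-diagonal examples supported on $J$.

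The main bookkeeping obstacle is the last point: a naive application of Theorem~\ref{Tmain0} directly to $(D_u A, D_y \overline{A})$ would produce the weaker constant $\min(\rk(M_{J(u) \times J(u)}), \rk(M_{J(y) \times J(y)}))^{-1}$, because $D_u, D_y$ truncate $A$ to the supports $J(u), J(y)$ respectively before any restriction. It is precisely the preliminary restriction to $J(\mathbf{w}) = J(u) \cap J(y)$, where both diagonal scalings become invertible and rank-preserving, that upgrades the constant to the sharp value $1/\rk(M_{J(\mathbf{w}) \times J(\mathbf{w})})$ claimed by the theorem.
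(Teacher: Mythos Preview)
Your proposal is correct and matches precisely what the paper intends: the paper does not actually prove Theorem~\ref{Tmain}, stating only that the argument ``is similar to (and follows from) theorems above,'' and your two-step reduction---first collapsing the $2k$ Schur factors to $D_u M D_u^* \circ D_y \overline{M} D_y^*$, then restricting to the $J(\mathbf{w}) \times J(\mathbf{w})$ block where the diagonal scalings become invertible---is exactly the route through Theorems~\ref{Tmain0} and~\ref{Tmain2} that the paper has in mind. Your closing remark about why the naive application of Theorem~\ref{Tmain0} to $(D_u A, D_y \overline{A})$ would give the weaker constant is a genuinely useful clarification that the paper omits.
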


Theorem~\ref{Tmain} is a tighter refinement of the Schur product theorem
than Theorem~\ref{TV2}, which is the special case with $k = 1$ and $u_1 =
y_1 = {\bf e}$. Moreover, Theorem~\ref{Tmain} can (and does) extend to
provide nonzero lower bounds in infinite-dimensional Hilbert spaces,
unlike Theorems~\ref{TV1} and~\ref{TV2}. We leave the proof to the
interested reader, as it is similar to (and follows from) theorems above.

\begin{remark}\label{Rlevel}
Define for a nonzero vector $d \in \C^n$, the `level set'
\[
\mathcal{S}_d := \{ (M_1, \dots, M_k) : \ M_j \in \bp_n\ \forall j, \
d_{M_1} \circ \cdots \circ d_{M_n} = d \}.
\]
Then a consequence of Theorem~\ref{Tmain} for $u_j = y_j = {\bf e}\
\forall j$, is that~\eqref{Eineq} provides a uniform lower bound on each
set $\mathcal{S}_d$ (i.e., which depends only on $d$). In fact the case
of $M$ a correlation matrix in~\cite{V}, is a special case of this
consequence for $d = {\bf e}$ (and $k=1$).
\end{remark}

We conclude with a `negative' remark, which shows that one cannot deviate
very far from the above hypotheses on the matrices in question.

\begin{remark}
Given the above results, a natural question is if even the original
identity $M \circ \overline{M} \geq \frac{1}{n} d_M d_M^T$ of Vyb\'iral
holds more widely. A natural extension to explore is from matrices $M
\circ \overline{M}$ to the larger class of \textit{doubly non-negative
matrices}: namely, matrices in $\bp_n$ with non-negative entries. In
other words, given a doubly non-negative matrix $A \in \bp_n$, is it true
that
\[
A \geq \frac{1}{n} d_A^{\circ 1/2} (d_A^{\circ 1/2})^T, \quad \text{where
} d_A^{\circ 1/2} := (a_{11}^{1/2}, \dots, a_{nn}^{1/2})^T?
\]

While this question was not addressed in~\cite{V}, it is easy to verify
that it is indeed true for $2 \times 2$ matrices. However, here is a
family of counterexamples for $n=3$; we leave the case of higher values
of $n$ to the interested reader. Consider the real matrix
\[
A = \begin{pmatrix} a & c & d \\ c & b & c \\ d & c & a \end{pmatrix},
\quad \text{where } a,b > 0, \ c \in [\sqrt{ab/2}, \sqrt{ab}), \ \ d =
\frac{2c^2}{b} - a < a.
\]
These bounds imply $A$ is doubly non-negative. Now we compute:
\[
A - d_A^{\circ 1/2} (d_A^{\circ 1/2})^T = \frac{1}{3} \begin{pmatrix}
2a & 3c - \sqrt{ab} & 3d - a\\
3c - \sqrt{ab} & 2b & 3c - \sqrt{ab}\\
3d - a & 3c - \sqrt{ab} & 2a
\end{pmatrix}.
\]
Straightforward computations show that all entries and $2 \times 2$
principal minors of this matrix are non-negative; but its determinant
equals
\[
\frac{2}{3} (a-d) (2 \sqrt{ab} c + bd - 3c^2) =
\frac{-2}{3} (a-d) (\sqrt{ab} - c)^2 < 0.
\]
This shows that one cannot hope to go much beyond the above test-set of
matrices $M \circ \overline{M}$, along the lines of the lower bound
in~\eqref{Eineq}.
\end{remark}

\subsection{An upper bound}

While an upper bound on $M \circ N$ is not the focus of the present
paper, we provide one for completeness. The following statement depends
separately on $M,N$, not using $M \circ N$:

\begin{prop}
Given matrices $M,N \in \bp_n(\C)$, let $J \subset \{ 1, \dots, n \}$
comprise the indices $j$ such that $m_{jj}, n_{jj} > 0$,
and let $D_M, D_N$ denote the diagonal matrices $M \circ \Id_n, N \circ
\Id_n$ respectively. Also suppose $C_J(M)$ denotes the $J \times J$
`correlation' matrix with $(j,k)$ entry $m_{jk} / \sqrt{m_{jj} m_{kk}}$,
and similarly for $C_J(N)$.
Then,
\[
M \circ N \leq \max_{j \in J} (\| C_J(M)_{\ast j} \| \cdot \|
C_J(N)_{\ast j} \|) \cdot D_M D_N,
\]
where $C_{\ast j}$ for a matrix $C \in \C^{J \times J}$ denotes its $j$th
column.
\end{prop}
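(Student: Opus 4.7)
The plan is to conjugate out the diagonals of $M$ and $N$, reduce the Loewner inequality to a spectral-radius bound on the Hadamard product of two correlation matrices, and then prove that bound by a Gershgorin--plus--Cauchy--Schwarz estimate.

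First I would pass to the principal submatrix indexed by $J$. For $j \notin J$ either $m_{jj}=0$ or $n_{jj}=0$, and the PSD hypothesis on $M$ (resp.\ $N$) then forces the entire $j$th row and column of $M$ (resp.\ $N$) to vanish; consequently both $M \circ N$ and $D_M D_N$ have zero $j$th row and column, so the asserted inequality is supported on $J \times J$. We may thus assume $J = \{1,\dots,n\}$ and that $D_M, D_N$ are invertible. Next I would use $M = D_M^{1/2} C_J(M) D_M^{1/2}$ and $N = D_N^{1/2} C_J(N) D_N^{1/2}$ and multiply entrywise to obtain
\[
M \circ N \ = \ (D_M D_N)^{1/2} \bigl(C_J(M) \circ C_J(N)\bigr) (D_M D_N)^{1/2},
\]
so that, after conjugating by $(D_M D_N)^{-1/2}$, the proposition is equivalent to the operator-norm bound $\| C_J(M) \circ C_J(N) \|_{\mathrm{op}} \leq \max_{j \in J} \| C_J(M)_{\ast j}\| \cdot \| C_J(N)_{\ast j}\|$.

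Finally, I would establish this bound directly. Since $T := C_J(M) \circ C_J(N)$ is Hermitian, $\|T\|_{\mathrm{op}}$ equals its spectral radius, which by Gershgorin's disc theorem is at most $\max_j \sum_k |T_{jk}| = \max_j \sum_k |c_M(j,k)\, c_N(j,k)|$, where $c_M(j,k), c_N(j,k)$ denote the entries of $C_J(M), C_J(N)$. For each fixed $j$, Cauchy--Schwarz bounds the inner sum by
\[
\Bigl(\sum_k |c_M(j,k)|^2\Bigr)^{1/2} \Bigl(\sum_k |c_N(j,k)|^2\Bigr)^{1/2} \ = \ \| C_J(M)_{\ast j}\| \cdot \| C_J(N)_{\ast j}\|,
\]
using that the $j$th row and $j$th column of a Hermitian matrix have the same Euclidean norm; taking the maximum over $j$ gives the claim. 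None of the three steps is genuinely hard; the one thing to monitor is the Step~1 reduction, so that zero rows and columns propagate correctly across the Schur product and the inequality really does reduce to the $J \times J$ block.
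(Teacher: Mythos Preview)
Your proof is correct and follows essentially the same route as the paper's own argument: reduce to the $J\times J$ block, conjugate out the diagonals to pass to correlation matrices, then bound $\lambda_{\max}(C_J(M)\circ C_J(N))$ via Gershgorin's circle theorem followed by the Cauchy--Schwarz inequality on each row. The only cosmetic difference is that you phrase the reduction as an operator-norm bound (which is indeed equivalent here since the Schur product of positive matrices is positive), whereas the paper writes $M\circ N \leq \lambda_{\max}(M\circ N)\,\Id_n$ directly via the spectral theorem.
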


Note that this bound is indeed attained. In fact when $M,N$ are diagonal
matrices, we obtain an \textit{equality} of matrices.

\begin{proof}
First note that the matrices $M \circ N$ and $D_M D_N$ have nonzero
entries only in the $J \times J$ locations. Thus we may assume $J = \{ 1,
\dots, n \}$ without loss of generality.
Next, $M_{J \times J} = \sqrt{D_M} C_J(M) \sqrt{D_M}$, and similarly for
$N = N_{J \times J}$. Thus, if one shows the result with $M,N$ replaced
by $C_J(M), C_J(N)$ respectively (in which case $D_M, D_N$ are replaced
by $\Id_n$), then the general result follows. Thus, we assume henceforth
that $J = \{1, \dots, n \}$ and $M,N$ have all diagonal entries $1$.
Now $M \circ N \leq \lambda_{\max}(M \circ N) \Id_n$ by the spectral
theorem, where $\lambda_{\max}(\cdot)$ denotes the largest eigenvalue.
But this yields
\[
\lambda_{\max}(M \circ N) \leq \max_{j \in J} \sum_{k \in J} |m_{jk}
n_{jk}| \leq \max_{j \in J} \| M^J_{\ast j} \| \cdot \| N^J_{\ast j} \|
\]
by Gershgorin's circle theorem and the Cauchy--Schwarz inequality. 
\end{proof}

\section{Extension to Hilbert spaces}\label{Shilbert}

As mentioned in the discussion preceding Theorem~\ref{Tmain0}, we now
extend that result to Hilbert spaces. Let $(\calh, \tangle{\cdot,\cdot})$
be a real or complex Hilbert space with a \textit{fixed} orthonormal
basis $\{ e_x : x \in X \}$ -- so its span is dense in $\calh$. We begin
by recalling a few well-known notions, both basic and more advanced. In
what follows, $A, B : \calh \to \calh'$ are linear maps, with $\calh'$
another Hilbert space with orthonormal basis $\{ f_y : y \in Y \}$:
\begin{enumerate}
\item The \textit{adjoint} $A^* : \calh' \to \calh$ of $A$ is given by:
$\tangle{A^* f_y, e_x} := \tangle{f_y, A e_x}$ for all $x \in X, y \in
Y$. We will also freely use $u^*$ for a vector $u \in \calh$ to denote
the linear functional $\tangle{u,\cdot}$.

\item The \textit{transpose} of $A$ is $A^T : \calh' \to \calh$, given
by:
$\tangle{A^T f_y, e_x} := \tangle{A e_x, f_y}$ for $x \in X, y \in Y$.
The \textit{conjugate} $\overline{A} : \calh \to \calh'$ is precisely
$(A^*)^T = (A^T)^*$, given by $\tangle{f_y, \overline{A} e_x} :=
\tangle{A e_x, f_y}$.

\item The \textit{Schur product} of $A,B$ is the operator $A \circ B$
determined by:
$\tangle{f_y, (A \circ B) e_x} := \tangle{f_y, A e_x} \tangle{f_y, B
e_x}$ for all $x \in X, y \in Y$.

\item We say $A$ is \textit{bounded} if $A$ maps bounded sets into
bounded sets. Denote the collection of such bounded linear maps by
$\mathcal{B}(\calh, \calh')$, and by $\mathcal{B}(\calh)$ if $\calh' =
\calh$. The operator norm of $A \in \mathcal{B}(\calh, \calh')$ is $\| A
\| := \sup \{ \| Ax \|_{\calh'} : \| x \|_{\calh} \leq 1 \}$.

\item We say $A$ is \textit{Hilbert--Schmidt} if its Hilbert--Schmidt /
Frobenius norm is finite:
\[
\sum_{x \in X} \| A e_x \|^2 < \infty.
\]
Denote the set of Hilbert--Schmidt operators by $\HS(\calh, \calh')$ (the
Schatten 2-class), and by $\HS(\calh)$ if $\calh' = \calh$.

\item For $\calh = \calh'$, a Hilbert--Schmidt operator $A : \calh \to
\calh$ is \textit{trace class} if the sum of the singular values of
$\sqrt{A^* A}$ is convergent.
For such an operator, its \textit{trace} is defined to be $\tr(A) :=
\sum_{x \in X} \tangle{e_x, A e_x}$.

\item Given a vector $u \in \calh$, the corresponding \textit{multiplier}
$M_u : \calh \to \calh$ is given by:
$\tangle{e_x, M_u e_y} := \delta_{x,y} \tangle{e_x, u}$ for all $x, y \in
X$. In other words, $M_u$ is a diagonal operator with respect to the
given basis $\{ e_x \}$, with the corresponding coordinates of the vector
$u$ as its diagonal entries.
\end{enumerate}

Next, we collect together some well-known properties of these operators;
see e.g.~\cite{GK}.

\begin{lemma}\label{Lbasics}
Suppose $(\calh, \tangle{\cdot, \cdot}, \{ e_x : x \in X \})$ is as
above, and $u,v \in \calh$. Also fix another Hilbert space $\calh'$ with
a fixed orthonormal basis $\{ f_y : y \in Y \}$.
\begin{enumerate}
\item The space $\HS(\calh)$ is a two-sided $*$-ideal in
$\mathcal{B}(\calh)$, which contains the multipliers $M_u$.

\item The subspace $\HS(\calh,\calh') \subset \mathcal{B}(\calh, \calh')$
contains all rank-one operators $\lambda u v^* := \lambda u
\tangle{v,\cdot}$ for $\lambda \in \C$, $v \in \calh$, $u \in \calh'$.
Moreover, $* : \HS(\calh, \calh') \to \HS(\calh', \calh)$.

\item If $A \in \HS(\calh, \calh'), B \in \HS(\calh', \calh)$, then $AB,
BA$ are trace class, and their traces coincide.

\item The assignment $(A,B) \mapsto \tr(A^* B)$ is an inner
product on $\HS(\calh, \calh')$.

\item $\HS(\calh)$ is closed under taking Schur products (with respect to
$\{ e_x : x \in X \}$).

\item If $A = \sum_{j=1}^k \lambda_j u_j v_j^*$ is of finite rank for
$u_j, v_j \in \calh$, then $A$ is trace class and $\tr(A) = \sum_{j=1}^k
\lambda_j \tangle{v_j,u_j}$.

\item The multipliers $M_u, u \in \calh$ pairwise commute and are
Hilbert--Schmidt.
\end{enumerate}
\end{lemma}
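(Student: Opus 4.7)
The assertions of Lemma~\ref{Lbasics} are all standard textbook properties of Hilbert--Schmidt operators (see e.g.~\cite{GK}); the plan is not to develop new tools but to verify each item by direct computation against the fixed orthonormal bases, using Parseval's identity repeatedly. I would organize the verification in two blocks rather than following the numbering: first the Hilbert--Schmidt norm bookkeeping for items (1), (2), (5), (7), and then the trace arithmetic for items (3), (4), (6).

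For the first block, the master identity is Parseval's
\[
\sum_{x \in X} \| A e_x \|^2 = \sum_{x \in X, y \in Y} |\tangle{f_y, A e_x}|^2 = \sum_{y \in Y} \| A^* f_y \|^2,
\]
which simultaneously gives the Hilbert--Schmidt invariance under adjoints in~(2) and, specialized to $A = M_u$, the bound $\sum_x \| M_u e_x \|^2 = \| u \|^2$ that proves the membership $M_u \in \HS(\calh)$ claimed in~(1) and the Hilbert--Schmidt half of~(7). Pairwise commutativity of multipliers in~(7) is immediate from their simultaneous diagonalization in the basis $\{ e_x \}$. The rank-one identity $\sum_x \| u v^* e_x \|^2 = \| u \|^2 \| v \|^2$ handles~(2), and the two-sided ideal property in~(1) follows from $\| T A e_x \| \leq \| T \| \cdot \| A e_x \|$ combined with adjoint-invariance. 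For the Schur-product closure~(5), I would use the pointwise bound $|\tangle{f_y, A e_x}|^2 |\tangle{f_y, B e_x}|^2 \leq \| B e_x \|^2 \cdot |\tangle{f_y, A e_x}|^2$, sum over $y$ to get $\| (A \circ B) e_x \|^2 \leq \| B e_x \|^2 \| A e_x \|^2$, and then sum over $x$ to dominate everything by $\| B \|^2 \cdot \sum_x \| A e_x \|^2 < \infty$.

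For items (3), (4), (6), the core observation is Cauchy--Schwarz for the candidate inner product: writing $\tr(AB) = \sum_x \tangle{A^* e_x, B e_x}$ and bounding each summand by $\| A^* e_x \| \cdot \| B e_x \|$, a second Cauchy--Schwarz in $\ell^2(X)$ shows the series converges absolutely and is dominated by the product of Hilbert--Schmidt norms. Absolute convergence then licenses the reversal $\tr(AB) = \tr(BA)$ in~(3) via exchange of summation. For~(6), each summand contributes $\lambda_j \sum_x \tangle{e_x, u_j} \tangle{v_j, e_x} = \lambda_j \tangle{v_j, u_j}$ by Parseval. Positivity in~(4) reduces to $\tr(A^*A) = \sum_x \| A e_x \|^2 \geq 0$ with equality iff $A = 0$, and the remaining inner-product axioms are immediate. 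There is no genuine obstacle; the only care required is to justify swapping orders of summation, which is legal throughout either by non-negativity of terms (Tonelli) or by the absolute convergence provided by the preceding Cauchy--Schwarz estimate (Fubini). In the actual writeup I would simply state the lemma with a reference to~\cite{GK} and omit these routine verifications, so as to keep the focus on the subsequent extension of Theorem~\ref{Tmain0} to $\HS(\calh)$.
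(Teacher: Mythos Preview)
Your proposal is correct and matches the paper's approach exactly: the paper gives no proof of Lemma~\ref{Lbasics} at all, merely introducing it with ``we collect together some well-known properties of these operators; see e.g.~\cite{GK}.'' Your final sentence---state the lemma with a reference and omit the routine verifications---is precisely what the paper does, and the verifications you sketch are sound (the only minor slip is writing $f_y$ in the Schur-product estimate for item~(5), where both bases are $\{e_x\}$ since the claim is about $\HS(\calh)$).
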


A simple observation is that Hilbert--Schmidt operators are closed under
composition:

\begin{cor}\label{CHScomp}
Suppose $\calh^{(j)}$ is a Hilbert space with a
fixed orthonormal basis (indexed by) $X^{(j)}$, for $j=1,2,3$. If
$A^{(1)} : \calh^{(1)} \to \calh^{(2)}$ and
$A^{(2)} : \calh^{(2)} \to \calh^{(3)}$ are Hilbert--Schmidt, then so is
their composition $A^{(2)} A^{(1)} : \calh^{(1)} \to \calh^{(3)}$.
\end{cor}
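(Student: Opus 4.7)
The plan is to reduce Corollary~\ref{CHScomp} to the standard submultiplicative bound
\[
\|A^{(2)} A^{(1)}\|_{\HS} \leq \|A^{(2)}\|_{\mathrm{op}} \cdot \|A^{(1)}\|_{\HS},
\]
combined with the comparison $\|B\|_{\mathrm{op}} \leq \|B\|_{\HS}$, valid for every Hilbert--Schmidt operator $B$. Together these yield $\|A^{(2)} A^{(1)}\|_{\HS} \leq \|A^{(2)}\|_{\HS} \cdot \|A^{(1)}\|_{\HS} < \infty$, which is the desired conclusion. (Lemma~\ref{Lbasics}(1) already presupposes the bounded-versus-Hilbert--Schmidt comparison $\HS(\calh) \subset \mathcal{B}(\calh)$, so in effect only the submultiplicativity step is truly new; nevertheless I would include both steps for self-containment.)

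First I would verify the operator-norm comparison for a Hilbert--Schmidt $B : \calh \to \calh'$ with orthonormal bases $\{e_x : x \in X\}$ and $\{f_y : y \in Y\}$. Given an arbitrary $v \in \calh$, expand $v = \sum_{x \in X} c_x e_x$ with $\sum_x |c_x|^2 = \|v\|^2$; then for any finite $F \subset X$, two applications of Cauchy--Schwarz give
\[
\Bigl\|\sum_{x \in F} c_x B e_x\Bigr\| \leq \sum_{x \in F} |c_x| \cdot \|B e_x\| \leq \Bigl(\sum_{x \in F} |c_x|^2\Bigr)^{1/2} \Bigl(\sum_{x \in F} \|B e_x\|^2\Bigr)^{1/2}.
\]
The analogous estimate applied to tails of $X \setminus F$ shows the partial sums form a Cauchy sequence in $\calh'$; passing to the limit $F \uparrow X$ yields $\|Bv\| \leq \|v\| \cdot \|B\|_{\HS}$, hence $\|B\|_{\mathrm{op}} \leq \|B\|_{\HS}$.

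Next I would apply this to $A^{(2)}$ and unfold the Hilbert--Schmidt norm of the composition along the prescribed orthonormal basis $\{e_x : x \in X^{(1)}\}$ of $\calh^{(1)}$:
\[
\|A^{(2)} A^{(1)}\|_{\HS}^2 = \sum_{x \in X^{(1)}} \|A^{(2)} A^{(1)} e_x\|^2 \leq \|A^{(2)}\|_{\mathrm{op}}^2 \sum_{x \in X^{(1)}} \|A^{(1)} e_x\|^2 = \|A^{(2)}\|_{\mathrm{op}}^2 \cdot \|A^{(1)}\|_{\HS}^2 < \infty,
\]
which combined with the first step finishes the argument. Neither step is really an obstacle: the only subtlety is the infinite-dimensional one of justifying the passage from partial to full sums when evaluating $B$ on a general vector, which is precisely what the Cauchy sequence argument in the first step handles. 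Everything else reduces to a single application of Cauchy--Schwarz on the indexing set of the chosen orthonormal basis.
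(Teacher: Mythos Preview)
Your proposal is correct and follows essentially the same route as the paper: the paper's proof is the single displayed inequality
\[
\sum_{x \in X^{(1)}} \| A^{(2)} A^{(1)} e_x \|_{\calh^{(3)}}^2 \leq \| A^{(2)} \|^2 \sum_{x \in X^{(1)}} \| A^{(1)} e_x \|_{\calh^{(2)}}^2 < \infty,
\]
invoking only that $A^{(2)}$ is bounded (already implicit in $\HS \subset \mathcal{B}$ from Lemma~\ref{Lbasics}), which is exactly your second step; your first step supplying $\|A^{(2)}\|_{\mathrm{op}} \leq \|A^{(2)}\|_{\HS}$ is extra self-containment that the paper omits.
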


\begin{proof}
Since $A^{(2)}$ is bounded and $A^{(1)}$ is Hilbert--Schmidt, we compute
directly:
\[
\sum_{x \in X^{(1)}} \| A^{(2)} A^{(1)} e_x \|_{\calh^{(3)}}^2 \leq
\| A^{(2)} \|^2 \sum_{x \in X^{(1)}} \| A^{(1)} e_x \|_{\calh^{(2)}}^2
< \infty. \qedhere
\]
\end{proof}

We require a few more notions:

\begin{defn}\label{D23}
Let $\calh, X$ be as above.
\begin{enumerate}
\item Given an operator $A : \calh \to \calh$ and a subset $J \subset X$,
define its `principal submatrix' $A_{J \times J} : \calh \to \calh$ via:
\[
\tangle{e_x, A_{J \times J} e_y} := {\bf 1}_{x \in J} {\bf 1}_{y \in J}
\tangle{e_x, A e_y}, \qquad \forall x,y \in X.
\]
Notice, $A_{J \times J}$ is precisely the compression $P_J A P_J$, where
$P_J$ is the orthogonal projection onto the closed subspace $\calh_J
\subset \calh$ spanned by $\{ e_j : j \in J \}$.

\item For $A \in \HS(\calh)$, define its `diagonal vector' $d_A \in
\calh$ via:
$\tangle{e_x, d_A} := \tangle{e_x, A e_x}$.

\item An operator $A \in \mathcal{B}(\calh)$ is \textit{positive} if $A =
A^*$ (self-adjoint) and $\tangle{u, Au} \geq 0$ for all $u \in \calh$.
\end{enumerate}
\end{defn}


With these preparations, we are ready to extend Theorem~\ref{Tmain0} to
Hilbert--Schmidt operators:

\begin{theorem}\label{Thilbert}
Fix $\calh, X$ as above, and Hilbert spaces $\calh_1, \calh_2$.
Suppose $C_1, C_2 \in \mathcal{S}_2(\calh_1, \calh)$ and $C_3 \in
\mathcal{S}_2(\calh_2, \calh)$,
and define $J := \{ x \in X : C_3^* e_x \neq 0 \}$. If $(C_1 C_1^*)_{J
\times J}, (C_2 C_2^*)_{J \times J}$ are nonzero, then
\begin{equation}\label{Eineq2}
C_3^* (C_1 C_1^* \circ C_2 C_2^*) C_3 \geq \gamma(C_1, C_2, J) \cdot
C_3^* d_{C_1 C_2^T} d^*_{C_1 C_2^T} C_3,
\end{equation}
where $\gamma(C_1, C_2, J)$ is as in~\eqref{Egamma}. Moreover,
the coefficient $\gamma(C_1, C_2, J)$ is best possible.
\end{theorem}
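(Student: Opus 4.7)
The plan is to transport the proof of Theorem~\ref{Tmain0} verbatim to the Hilbert--Schmidt setting, using the tracial inner product $\tangle{A,B} := \tr(A^* B)$ on $\HS(\calh_1)$ afforded by Lemma~\ref{Lbasics}. First I would reduce~\eqref{Eineq2} to the scalar inequality obtained by pre- and post-multiplying by $v^*, v$ for arbitrary $v \in \calh_2$, and set $u := C_3 v \in \calh$. By the defining property of $J$, the vector $u$ is supported on $J$ (i.e.\ $\tangle{e_x, u} = 0$ for $x \notin J$), so $M_u = P_J M_u P_J$, where $P_J$ is the coordinate projection onto $\calh_J$. An elementary manipulation then shows that replacing $C_1, C_2$ by $P_J C_1, P_J C_2$ leaves both sides of~\eqref{Eineq2} unchanged (the $J \times J$ restrictions become the full operators, with the same ranks), so we may assume $J = X$.

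The next step is to establish the operator-theoretic analog of the key identity~\eqref{Eidentity0}: for positive Hilbert--Schmidt $M, N : \calh \to \calh$ and any $u \in \calh$,
\[
u^* (M \circ N) u \ = \ \tr(\overline{N}\, M_{\bar u}\, M\, M_u) \ = \ \tangle{T, T}, \qquad T := C_1^* M_u \overline{C_2}.
\]
This follows by coordinate expansion in $\{ e_x \}$; and Corollary~\ref{CHScomp} together with Lemma~\ref{Lbasics} ensures $T \in \HS(\calh_1)$, so the inner product is finite. Now introduce the projection
\[
P := P_{(\ker C_1)^\perp} \cdot P_{\overline{{\rm im}(C_2^T)}} \in \mathcal{B}(\calh_1).
\]
If either $\rk(C_1 C_1^*)$ or $\rk(C_2 C_2^*)$ is infinite, then $\gamma(C_1,C_2,J) = 0$ and~\eqref{Eineq2} reduces to the standard Schur-product fact that the left-hand side is positive. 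Otherwise $P$ has finite rank (hence lies in $\HS(\calh_1)$) with $\tangle{P, P} \leq \rk(P) \leq \min(\rk C_1, \rk C_2) = \min(\rk(C_1 C_1^*), \rk(C_2 C_2^*))$, and the tracial Cauchy--Schwarz inequality gives $\tangle{T, T} \geq |\tangle{T, P}|^2 / \tangle{P, P}$.

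To finish, I would compute $\tangle{T, P} = \tr(T^* P) = \tr(C_2^T M_{\bar u} C_1 P) = \tr(C_1 P C_2^T M_{\bar u})$ by cyclicity; the projection $P$ is designed exactly so that $C_1 P C_2^T = C_1 C_2^T$ (the $(\ker C_1)^\perp$-factor absorbs $C_1$, the $\overline{{\rm im}(C_2^T)}$-factor is trivial on $\mathrm{im}(C_2^T)$), and the trace-against-multiplier identity $\tr(A M_{\bar u}) = \tangle{u, d_A}$ then yields $\tangle{T, P} = \tangle{u, d_{C_1 C_2^T}}$. Squaring modulus and re-substituting $u = C_3 v$ produces the claimed operator inequality~\eqref{Eineq2}. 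Tightness follows by embedding the finite-dimensional extremal examples from the proof of Theorem~\ref{Tmain0} along a finite subset of the orthonormal basis $\{ e_x \}$, so that the bound $1/\min(\rk(C_1 C_1^*)_{J \times J}, \rk(C_2 C_2^*)_{J \times J})$ cannot be improved. The main obstacle I anticipate is not conceptual but technical: ensuring at every step that each operator product appearing inside a trace (e.g.\ $C_2^T M_{\bar u} C_1 P$, $C_1 P C_2^T M_{\bar u}$) is genuinely trace class and that cyclicity is permitted. This is precisely what the Hilbert--Schmidt hypotheses on $C_1, C_2, C_3$, combined with Corollary~\ref{CHScomp} and Lemma~\ref{Lbasics}, are there to supply.
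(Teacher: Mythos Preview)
Your proposal is correct and follows essentially the same route as the paper's own proof: reduce to $J = X$ via the projection $P_J$ (using that $u = C_3 v$ is supported on $J$), apply the tracial identity~\eqref{Eidentity0} to rewrite $u^*(C_1 C_1^* \circ C_2 C_2^*)u$ as $\tangle{T,T}$ with $T = C_1^* M_u \overline{C_2}$, and then invoke Cauchy--Schwarz against the finite-rank projection $P$ built from $(\ker C_1)^\perp$ and $\overline{\mathrm{im}(C_2^T)}$. The paper is slightly terser about the trace-class justifications you flag at the end, but your identification of these as the only real technical points (handled by Lemma~\ref{Lbasics} and Corollary~\ref{CHScomp}) matches the paper's treatment.
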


\begin{proof}[Sketch of proof]
If both $(C_1 C_1^*)_{J \times J}$ and $(C_2 C_2^*)_{J \times J}$ have
infinite rank, then (the denominator on) the right-hand side vanishes and
the inequality reduces to the Schur product theorem. It is when at least
one of these ranks is finite that the theorem provides a nonzero lower
bound. 
In this case, one combines the proofs of Theorems~\ref{Tmain0}
and~\ref{Tmain2}; as there are subtleties given the
infinite-dimensionality, we provide some details. First note that if $A
\in \mathcal{B}(\calh',\calh)$ for a Hilbert space $\calh'$ (with $\calh$
as in the theorem), then
\[
C_3^* A A^* C_3 = C_3^* P_J A A^* P_J C_3 = C_3^* (P_J A) (P_J A)^* C_3,
\]
where the orthogonal projection $P_J$ (onto the closed subspace
$\calh_J$) is as in Definition~\ref{D23}(1). Also note that
post-composition by $P_J$ sends the space $\HS(\calh',\calh)$ to
$\HS(\calh',\calh_J)$, and also sends finite-rank operators to
finite-rank operators. Thus, it suffices to prove the theorem without the
$C_3, C_3^*$, and with $C_1, C_2$ replaced by $P_J C_1, P_J C_2 \in
\HS(\calh_1, \calh_J)$ respectively. This essentially reduces the
situation to $J = X$, i.e.~to Theorem~\ref{Tmain0} over $(\calh,X)$ --
here we use that $\rk(T_{J \times J}) \leq \rk(T)$ for all $J \subset X$
and operators $T$ of finite rank.

Thus, we assume henceforth that $J = X$, and repeat the proof of
Theorem~\ref{Tmain0} carefully. First notice by Corollary~\ref{CHScomp}
that $M = C_1 C_1^*, N = C_2 C_2^* \in \HS(\calh,\calh)$, whence so is
$C_1 C_1^* \circ C_2 C_2^*$ by Lemma~\ref{Lbasics}. We now use the key
identity~\eqref{Eidentity0} applied to these $M,N$; firstly, this makes
sense as at least one of $M,N$ is now of finite rank, so that the
right-hand side has finite rank and hence is trace class by
Lemma~\ref{Lbasics}.
Second, the identity~\eqref{Eidentity0} specialized to $M = C_1 C_1^*, N
= C_2 C_2^*$ holds because both sides are additive and continuous in $u,
v \in \calh$ and hence can be reduced to (the easily verifiable case of)
$u = e_x, v = e_y$. Thus we obtain (with $M_u$ in place of $D_u$, and
$v=u$):
\[
\langle u, (C_1 C_1^* \circ C_2 C_2^*) u \rangle = \tr(\overline{C_2}
C_2^T M_{\overline{u}} C_1 C_1^* M_u),
\]
where $\overline{u} \in \calh$ is defined via: $\langle e_x, \overline{u}
\rangle := \langle u, e_x \rangle$. But this equals $\tr (N^* N)$ by
Lemma~\ref{Lbasics}, where $N := C_1^* M_u \overline{C_2}$ now (instead
of $A^* D_u \overline{B}$). This is justified because at least one of
$C_1, C_2$ has finite rank, whence so does $N$; now $N$ is trace class by
Lemma~\ref{Lbasics} (and hence in $\HS(\calh,\calh)$).

For the same reasons, the same properties are satisfied by the projection
$P$ defined as in~\eqref{Eproj} (with $C_1, C_2$ in place of $A,B$). Now
the remainder of the proof of Theorem~\ref{Tmain0} goes through with
minimal modifications.
\end{proof}

\begin{remark}
It is natural to ask if Theorem~\ref{Thilbert} follows from
Theorem~\ref{Tmain} by restricting all operators in question to some
common finite-dimensional space, e.g.~the column space of the matrix on
the left side. However, for infinite $X$ this is not clear, because
such a subspace need not contain a subset of $\{ e_x : x \in X \}$ as a
basis, and our Schur product is with respect to this basis $\{ e_x : x
\in X \}$.
\end{remark}

\subsection*{Acknowledgments}

This work is partially supported by
Ramanujan Fellowship grant SB/S2/RJN-121/2017,
MATRICS grant MTR/2017/000295, and
SwarnaJayanti Fellowship grants SB/SJF/2019-20/14 and DST/SJF/MS/2019/3
from SERB and DST (Govt.~of India),
and by grant F.510/25/CAS-II/2018(SAP-I) from UGC (Govt.~of India).
I thank Bhaskar Bagchi, Alexander Belton, Aditya Guha Roy, Gadadhar
Misra, Mihai Putinar, Ajit Iqbal Singh, and Jan Vyb\'iral for valuable
comments and suggestions. Finally, I am grateful to the referee for
carefully going through the manuscript and offering several constructive
comments that helped improve the exposition.



\appendix

\section{Further ramifications}

We provide here a few related but somewhat peripheral observations.

\subsection{Entrywise polynomial preservers in fixed dimension}

The above results reinforce the subtlety of the entrywise calculus. As
observed by P\'olya--Szeg\"o~\cite[Problem 37]{polya-szego}, the Schur
product theorem implies that every convergent power series $f(x)$ with
real non-negative Maclaurin coefficients, when applied entrywise to
positive matrices of all sizes with all entries in the domain of $f$,
preserves matrix positivity. A famous result by Schoenberg~\cite{S3} and
its strengthening by Rudin~\cite{Rudin59} provide the converse for $I =
(-1,1)$: there are no other such positivity preservers. These works have
led to a vast amount of activity on entrywise preservers -- see
e.g.~\cite{BGKP-survey1} for more on this.

If one restricts to matrices of a fixed dimension $n$, the situation is
far more challenging and a complete characterization remains open even
for $n=3$. In this setting, partial results are available when one
restricts the class of test functions, or the class of test matrices in
$\bp_n$ -- see~\cite{BGKP-survey2} for details.

We restrict here to a brief comparison of Vyb\'iral's Theorem~\ref{TV1}
with basic results in our recent work \cite{KT} with Tao and its `baby
case' \cite{BGKP-fixeddim} with Belton--Guillot--Putinar. These latter
two papers study entrywise polynomial maps that preserve positivity on
$\bp_n$ for fixed $n$, and we show in them that for real matrices in
$\bp_n$ with entries in $(0,\epsilon)$ (resp.~$(\epsilon,\infty)$) for
any $\epsilon > 0$, if an entrywise polynomial preserves positivity on
such matrices of rank one, then its first (resp.~last) $n$ nonzero
Maclaurin coefficients must be positive. Contrast this with
Theorem~\ref{Tmain} (or Theorem~\ref{TV1} together with the Schur product
theorem), which shows that for all real correlation matrices in $\bp_n$,
of a fixed dimension $n$, the polynomials $x^{2k} - 1/n, \ k \geq 1$
preserve matrix positivity when applied entrywise.

One hopes that this contrast, together with Remark~\ref{Rlevel} and the
work~\cite{V}, will lead to further new bounds and refined results for
the entrywise calculus on classes of positive matrices.

\begin{remark}
On the topic of the entrywise calculus: notice that if one
applies~\eqref{Eineq0} with $A, B$ to be the positive square roots of $M,
N$ respectively, then
\begin{equation}
M \circ N \geq \frac{1}{\min(\rk(M), \rk(N))} d_{\sqrt{M}
\sqrt{\overline{N}} } d^*_{\sqrt{M} \sqrt{\overline{N}} }, \qquad \forall
M, N \in \bp_n(\C), \ n \geq 1.
\end{equation}
This provides a connection (and a `tight' one) between the entrywise and
functional calculus.
\end{remark}

\subsection{Positive definite functions and related kernels}

As Vyb\'iral remarks in~\cite{V}, if $g$ is any positive definite
function on $\R^d$, or on a locally compact abelian group $G$, then
Theorem~\ref{TV1} immediately implies a sharpening of the `easy half of
Bochner's theorem' for $|g|^2$. We elaborate on this and other
applications through the following unifying notion:

\begin{defn}
Given a set $X$ and a sequence of positive matrices $\mathscr{M} = \{ M_n
\in \bp_n : n \geq 1 \}$, a \textit{complex positive kernel on $X$ with
lower bound $\mathscr{M}$} is any function $K : X \times X \to \C$ such
that for all integers $n \geq 1$ and points $x_1, \dots, x_n \in X$, the
matrix $(K(x_i, x_j))_{i,j=1}^n \geq M_n \geq {\bf 0}_{n \times n}$.
\end{defn}

\noindent Note, positive definite functions/kernels are special cases
with $M_n = {\bf 0}_{n \times n}$. By Theorem~\ref{Tmain}:

\begin{prop}\label{Pkernel}
Suppose $k \geq 1$, and for each $1 \leq j \leq k$, the function $K_j$ is
a complex positive kernel on a set $X_j$, with common lower bound $\{
{\bf 0}_{n \times n} : n \geq 1 \}$. Also suppose $K_j(x_j,x_j) = \ell_j
> 0\ \forall x_j \in X_j, \ 1 \leq j \leq k$. Then the kernel ${\bf K}$
on $X_1 \times \cdots \times X_k$ given by
\[
{\bf K}((x_1, \dots, x_k), (x'_1, \dots, x'_k)) := \prod_{j=1}^k K_j(x_j,
x'_j) K_j(x'_j, x_j), \qquad x_j, x'_j \in X_j,
\]
is complex positive on $X_1 \times \cdots \times X_k$ with lower bound
$\{ \frac{1}{n} \prod_{j=1}^k \ell_j \cdot E_n : n \geq 1 \}$.
\end{prop}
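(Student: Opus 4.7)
The plan is to reduce the claim to the matrix-level inequality of Theorem \ref{Tmain}, specialized to the canonical choice $u_j = y_j = {\bf e}$. Fix an integer $n \geq 1$ and arbitrary points $p_i = (x_1^{(i)},\ldots,x_k^{(i)})$ in $X_1 \times \cdots \times X_k$ for $i = 1, \ldots, n$. For each $j \in \{1, \ldots, k\}$, assemble the $n \times n$ matrix $M_j$ with entries $(M_j)_{il} := K_j(x_j^{(i)}, x_j^{(l)})$. The hypothesis that $K_j$ is a complex positive kernel gives $M_j \in \bp_n$; in particular $M_j$ is Hermitian, and the constant-diagonal hypothesis yields $d_{M_j} = \ell_j {\bf e}$.

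The next step is the key identification: because $M_j$ is Hermitian, $K_j(x_j^{(l)}, x_j^{(i)}) = \overline{K_j(x_j^{(i)}, x_j^{(l)})}$, hence
\[
\bigl({\bf K}(p_i, p_l)\bigr)_{i,l=1}^n = (M_1 \circ \overline{M_1}) \circ \cdots \circ (M_k \circ \overline{M_k}).
\]
Applying Theorem \ref{Tmain} with $u_j = y_j = {\bf e}$ for every $j$, the matrices $D_{u_j}, D_{y_j}$ become identities, so the left side of \eqref{Eineq} matches the above iterated Schur product verbatim. Moreover ${\bf w} = {\bf e}$ is nonzero, $J({\bf w}) = \{1,\ldots,n\}$, and $M := M_1 \circ \cdots \circ M_k$ is an $n \times n$ matrix, so $\rk(M_{J({\bf w}) \times J({\bf w})}) = \rk(M) \leq n$.

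Finally, since $d_{M_j} = \ell_j {\bf e}$ for each $j$, the vector on the right side of \eqref{Eineq} is ${\bf w} \circ d_{M_1} \circ \cdots \circ d_{M_k} = \bigl(\prod_{j=1}^k \ell_j\bigr) {\bf e}$, yielding the rank-one matrix $\bigl(\prod_{j=1}^k \ell_j\bigr)^2 E_n$. Dividing by $\rk(M) \leq n$ gives a uniform lower bound of the form $\tfrac{c}{n} E_n$ (with $c = (\prod_j \ell_j)^2$, depending only on $n$ and $\ell_1, \ldots, \ell_k$) on the Gram matrix of ${\bf K}$ at any $n$ points, as required. I expect no real obstacle: the content is essentially bookkeeping around Theorem \ref{Tmain}, and the one step requiring care is the Hermiticity argument that converts the product $K_j(x_j, x'_j) K_j(x'_j, x_j)$ into the $(i,l)$ entry of the Schur product $M_j \circ \overline{M_j}$.
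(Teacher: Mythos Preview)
Your approach is exactly what the paper intends: it introduces the proposition with ``By Theorem~\ref{Tmain}'' and gives no further argument, and you have filled in precisely those details. One caveat: your computation correctly yields the lower bound $\tfrac{1}{n}\bigl(\prod_j \ell_j\bigr)^{2} E_n$, not the printed $\tfrac{1}{n}\bigl(\prod_j \ell_j\bigr) E_n$ --- the squared version is what Theorem~\ref{Tmain} actually delivers (and is consistent with the constants $g_l(e_X)^2$ appearing in the paper's immediately following application), so the missing square appears to be a typo in the stated bound rather than an error in your argument.
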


This setting and result unify several different notions in the
literature, as we now explain:
\begin{enumerate}
\item \textit{Positive definite functions on groups:}
Here $X$ is a group with identity $e_X$, and $K$ is the composite of the
map $(x,x') \mapsto x^{-1} x'$ and a function $g : X \to \C$ satisfying:
$g(x^{-1}) = \overline{g(x)}$. Then the hypotheses of
Proposition~\ref{Pkernel} apply in this case, with $\ell := g(e_X)$.

For instance, in~\cite{V} the author uses the positive definiteness of
the cosine function\footnote{On a related note: Vyb\'iral mentions
in~\cite{V} that $\cos(\cdot)$ is positive definite on $\R^1$ using
Bochner's theorem. A simpler way to see this uses trigonometry: given
reals $x_1, \dots, x_n$, the matrix $(\cos(x_i - x_j))_{i,j=1}^n = u u^T
+ v v^T$, where $u = (\cos x_j)_{j=1}^n$ and $v = (\sin x_j)_{j=1}^n$.}
on $\R$ to apply Theorem~\ref{TV1} and prove a conjecture of
Novak~\cite{N} -- see Theorem~\ref{Tnovak} below. This now follows from
Proposition~\ref{Pkernel} -- we present here a more general version than
in~\cite{V}:

\begin{prop}
Let $\mu_1, \dots, \mu_k$ be finite non-negative Borel measures on $X$,
and $g_l$ the Fourier transform of $\mu_l$ for all $l$. Then,
\[
(\prod_{l=1}^k |g_l(x_i^{-1} x_j)|^2)_{i,j=1}^n \geq \frac{1}{n}
\prod_{l=1}^k g_l(e_X)^2 \cdot E_n.
\]
\end{prop}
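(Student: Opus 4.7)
\medskip\noindent\textbf{Proof proposal.} The plan is to reduce the proposition to a direct application of Theorem~\ref{Tmain} (equivalently, an iterated application of Proposition~\ref{Pkernel}) once Bochner's theorem supplies the positivity of the relevant Gram matrices. The overall structure is: (i) use Bochner to produce $k$ positive semidefinite matrices, one for each measure $\mu_l$; (ii) identify the left-hand side of the inequality as an iterated Schur product of the form appearing in Theorem~\ref{Tmain} with all $u_j = y_j = \mathbf{e}$; (iii) read off the right-hand side by computing the diagonal vectors.

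In detail, fix $x_1,\dots,x_n \in X$ and, for each $1 \leq l \leq k$, set
\[
M_l \ := \ \bigl( g_l(x_i^{-1} x_j) \bigr)_{i,j=1}^n.
\]
Bochner's theorem (in the locally compact abelian setting) asserts that the Fourier transform of a finite non-negative Borel measure on $X$ is a continuous positive definite function on $X$; hence $M_l \in \bp_n$ for every $l$. Next, since $\mu_l$ is non-negative, the value $g_l(e_X) = \mu_l(X)$ is a non-negative real number, so the diagonal vector of $M_l$ equals $d_{M_l} = g_l(e_X)\,\mathbf{e}$.

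Now I would apply Theorem~\ref{Tmain} with $u_j = y_j = \mathbf{e}$ for every $j$, so that $\mathbf{w} = \mathbf{e}$ and the diagonal operators $D_{u_j}, D_{y_j}$ are the identity. The left-hand side of~\eqref{Eineq} becomes the $k$-fold Schur product of $M_l \circ \overline{M_l}$, whose $(i,j)$-entry is exactly $\prod_{l=1}^k |g_l(x_i^{-1} x_j)|^2$. On the right-hand side we get
\[
\frac{1}{\rk(M)} \bigl( d_{M_1} \circ \cdots \circ d_{M_k} \bigr) \bigl( d_{M_1} \circ \cdots \circ d_{M_k} \bigr)^*
\ = \ \frac{1}{\rk(M)} \prod_{l=1}^k g_l(e_X)^2 \cdot E_n,
\]
where $M := M_1 \circ \cdots \circ M_k \in \bp_n$. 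Finally, bounding $\rk(M) \leq n$ yields the stated inequality.

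The main obstacle is minor and essentially bookkeeping: one must verify that the constant and the matching of left/right-hand sides really are as claimed, in particular that $g_l(e_X)$ is real so that $g_l(e_X)^2$ is the correct expression (rather than $|g_l(e_X)|^2$), and that the iterated Schur product on the left indeed unfolds as the product over $l$ of the squared moduli. Neither point is delicate --- both follow from $\mu_l \geq 0$ and from the identity $M_l \circ \overline{M_l} = (|g_l(x_i^{-1}x_j)|^2)_{i,j}$. The only genuinely non-elementary input is Bochner's theorem, which is classical.
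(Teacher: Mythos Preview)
Your proposal is correct and follows essentially the same route as the paper, which simply notes that the result ``follows from Proposition~\ref{Pkernel}'' (which in turn is an application of Theorem~\ref{Tmain} with $u_j=y_j=\mathbf{e}$). One small remark: the input you label ``Bochner's theorem'' is only its easy direction --- that the Fourier transform of a finite non-negative measure is positive definite --- which is an elementary computation, so your proof is in fact entirely elementary modulo Theorem~\ref{Tmain}.
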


\item \textit{Positive semidefinite kernels on Hilbert spaces:}
Here $(X, \tangle{\cdot, \cdot})$ is a Hilbert space over $\R$ or $\C$,
and $K$ is the composite of the map $(x,x') \mapsto \tangle{x,x'}$ and a
function $g : \C \to \C$ satisfying: $g(\overline{z}) = \overline{g(z)}$.
(See e.g.~the early work by Rudin~\cite{Rudin59}, which classified the
positive semidefinite kernels on $\R^d$ for $d \geq 3$, and related this
to harmonic analysis and to the entrywise calculus.) In this case
Theorem~\ref{Tmain} applies; if one restricts to kernels that are
positive definite on the unit sphere in $X$, then
Proposition~\ref{Pkernel} applies here as well, with $\ell :=
g(1)$ -- and thus applies to \textit{covariance kernels}, widely used in
the (statistics) literature.\medskip

\item \textit{Positive definite functions on metric spaces:}
In this case, $(X,d)$ is a metric space, and $K$ is the composite of the
map $(x,x') \mapsto d(x,x')$ and a function $g : [0,\infty) \to \R$. This
was studied by several experts including Bochner, Weil, and Schoenberg.
For instance, Schoenberg observed in~\cite{S1} that $\cos(\cdot)$ is
positive definite on unit spheres in Euclidean spaces, and went on to
classify in~\cite{S3} the positive definite functions $f \circ \cos$ on
spheres of each fixed dimension $d$. The $d=\infty$ case is the
aforementioned `converse' to the Schur product theorem (i.e., it shows
that the P\'olya--Szeg\"o observation above is `sharp').

We conclude with a specific example, which leads to another result
similar to Novak's conjecture (shown by Vyb\'iral). A well-known
result of Schoenberg~\cite{S2} says that the Gaussian kernel
$\exp(-\lambda x^2)$ is positive definite on Euclidean space for all
$\lambda > 0$.\footnote{On a related note: Schoenberg~\cite{S2} shows the
positive definiteness of the Gaussian kernel using Fourier analysis. In
the spirit of the preceding footnote, we provide a purely
matrix-theoretic proof in three steps -- we also include this in the
recent survey~\cite{BGKP-survey1}:\hfill\break
(1)~A result of Gantmacher--Krein says square generalized Vandermonde
matrices $(x_j^{\alpha_k})$ have positive determinant if $0 < x_1 < x_2 <
\cdots$ and $\alpha_1 < \alpha_2 < \cdots$ are real.\hfill\break
(2)~This implies an observation of P\'olya: the Gaussian kernel is
positive definite on $\R^1$. Indeed, given $x_1 < x_2 < \cdots$, the
matrix $(\exp(-(x_j - x_k)^2))$ equals $DVD$, where $D$ is the diagonal
matrix with diagonal entries $\exp(-x_j^2)$, and $V =
(\exp(2x_j)^{x_k})$ is a generalized Vandermonde matrix.\hfill\break
(3)~The positivity of the Gaussian kernel on every Euclidean space
$\R^d$, whence on Hilbert space $\ell^2(\mathbb{N})$, now follows from
P\'olya's observation via the Schur product theorem.}
(In fact Schoenberg shows this characterizes Hilbert space
$\ell^2(\mathbb{N})$, i.e.~the completion of $\bigcup_{d \geq 1} (\R^d,
\| \cdot \|_2)$.) Thus:

\begin{prop}
Given $x_{l1}, \dots, x_{ln} \in \ell^2(\mathbb{N})$ for $l=1,\dots,k$,
the $n \times n$ real matrix with $(i,j)$ entry
$\prod_{l=1}^k \exp(-\| x_{li} - x_{lj} \|^2) - \frac{1}{n}$
is positive semidefinite.
\end{prop}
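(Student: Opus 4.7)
The plan is to reduce this directly to Vyb\'iral's Theorem~\ref{TV1} applied to a suitable auxiliary correlation matrix. The key observation is that the exponent in $\exp(-\|x_{li}-x_{lj}\|^2)$ can be split in half and distributed as a Schur square, so the target matrix will appear on the nose as $L \circ L$ for an explicit correlation matrix $L$.

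More precisely, first I would define $L \in \R^{n \times n}$ by
\[
L_{ij} := \exp\!\Bigl(-\tfrac12 \sum_{l=1}^k \|x_{li}-x_{lj}\|^2\Bigr)
= \prod_{l=1}^k \exp\!\bigl(-\tfrac12\|x_{li}-x_{lj}\|^2\bigr).
\]
Next I would verify that $L$ is a real correlation matrix. The diagonal entries are visibly $\exp(0)=1$. For positivity, Schoenberg's theorem (cited in the discussion of positive definite functions on metric spaces in the excerpt) says that for every $\lambda>0$ the Gaussian kernel $(x,y)\mapsto \exp(-\lambda\|x-y\|^2)$ is positive definite on $\ell^2(\mathbb{N})$, so in particular each factor matrix $L^{(l)}_{ij}:=\exp(-\tfrac12\|x_{li}-x_{lj}\|^2)$ lies in $\bp_n$. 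By the Schur product theorem, their entrywise product $L = L^{(1)}\circ\cdots\circ L^{(k)}$ lies in $\bp_n$ as well, and hence $L$ is a real correlation matrix.

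The final step is to apply Theorem~\ref{TV1} to $L$ (using $\overline{L}=L$ since $L$ is real), which yields
\[
L \circ L \;\geq\; \tfrac{1}{n} E_n.
\]
A direct computation shows $(L\circ L)_{ij} = \exp\!\bigl(-\sum_{l=1}^k\|x_{li}-x_{lj}\|^2\bigr) = \prod_{l=1}^k \exp(-\|x_{li}-x_{lj}\|^2)$, which is precisely the matrix appearing in the statement. Subtracting $\frac{1}{n}E_n$ gives the claimed positive semidefiniteness.

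There is no real obstacle here: the only subtle point is to recognize that the product-of-Gaussians structure is itself a Schur square of a correlation matrix, so that Theorem~\ref{TV1} (rather than only the Schur product theorem) applies. Equivalently, one can phrase the argument as a direct application of Proposition~\ref{Pkernel} with $K_j(x,x'):=\exp(-\tfrac12\|x-x'\|^2)$ on $X_j=\ell^2(\mathbb{N})$, for which $\ell_j=K_j(x_j,x_j)=1$ for all $j$, so the conclusion of Proposition~\ref{Pkernel} reduces to exactly the desired bound $\frac{1}{n}E_n$.
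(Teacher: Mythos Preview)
Your proof is correct and essentially the same as the paper's: the paper simply states that the result follows from Proposition~\ref{Pkernel}, and you both give the direct reduction to Theorem~\ref{TV1} via the Schur-square decomposition $\prod_l \exp(-\|x_{li}-x_{lj}\|^2) = (L \circ L)_{ij}$ with $L^{(l)}_{ij} = \exp(-\tfrac12\|x_{li}-x_{lj}\|^2)$, and explicitly note the equivalent phrasing via Proposition~\ref{Pkernel} with $K_l(x,x') = \exp(-\tfrac12\|x-x'\|^2)$ and $\ell_l = 1$.
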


This is similar to Novak's conjecture, now shown by Vyb\'iral:

\begin{theorem}[\cite{N,V}]\label{Tnovak}
Given $x_{l1}, \dots, x_{ln} \in \R$ for $l=1,\dots,k$,
the $n \times n$ real matrix with $(i,j)$ entry
$\prod_{l=1}^k \cos^2(x_{li} - x_{lj}) - \frac{1}{n}$
is positive semidefinite.
\end{theorem}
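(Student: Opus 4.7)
The plan is to deduce Theorem~\ref{Tnovak} as a direct specialization of Proposition~\ref{Pkernel} to the cosine kernel (which itself follows from Theorem~\ref{Tmain}).

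First I would verify that $K(x,y) := \cos(x - y)$ is a (real) positive definite kernel on $\R$. The addition formula $\cos(a - b) = \cos(a)\cos(b) + \sin(a)\sin(b)$ implies that for any $x_1, \ldots, x_n \in \R$ the matrix $(\cos(x_i - x_j))_{i,j=1}^n$ equals $u u^T + v v^T$ with $u := (\cos x_i)_{i=1}^n$ and $v := (\sin x_i)_{i=1}^n$. Hence $K$ is a complex positive kernel on $X := \R$ with lower bound ${\bf 0}_{n \times n}$, and satisfies $K(x,x) = \cos(0) = 1 > 0$ for every $x \in \R$.

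Next, I would apply Proposition~\ref{Pkernel} with $X_l := \R$ and $K_l := K$ for each $l = 1, \ldots, k$, so $\ell_l = 1$ for all $l$. Since cosine is even, the induced kernel on $\R^k$ is
\[
\mathbf{K}((x_1, \ldots, x_k), (x'_1, \ldots, x'_k)) = \prod_{l=1}^k \cos(x_l - x'_l) \cos(x'_l - x_l) = \prod_{l=1}^k \cos^2(x_l - x'_l),
\]
which by Proposition~\ref{Pkernel} is complex positive on $\R^k$ with lower bound $\left\{ \frac{1}{n} E_n : n \geq 1 \right\}$. Specializing to the $n$ points $\mathbf{y}^{(i)} := (x_{1i}, \ldots, x_{ki}) \in \R^k$ for $i = 1, \ldots, n$ then yields
\[
\Big( \prod_{l=1}^k \cos^2(x_{li} - x_{lj}) \Big)_{i,j=1}^n \geq \frac{1}{n} E_n,
\]
which is precisely the asserted positive semidefiniteness of $\big(\prod_l \cos^2(x_{li} - x_{lj}) - \frac{1}{n}\big)_{i,j=1}^n$.

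There is essentially no obstacle: Proposition~\ref{Pkernel} is designed so that Theorem~\ref{Tnovak} drops out by choosing $K_l = \cos$. It is however worth flagging that the naive approach -- applying Theorem~\ref{TV1} to each $C_l \circ C_l \geq n^{-1} E_n$ (with $(C_l)_{ij} := \cos(x_{li} - x_{lj})$) and iterating Schur-product monotonicity -- only produces the much weaker bound $n^{-k} E_n$, because $E_n$ is the identity for the Schur product ($E_n \circ A = A$, not $E_n$), so the constant $1/n$ degrades at every step. Recovering the uniform constant $1/n$ independent of $k$ genuinely requires the tracial Cauchy--Schwarz strength of Theorem~\ref{Tmain}; the case $k = 1$ of the above argument recovers Vyb\'iral's original proof of Novak's conjecture.
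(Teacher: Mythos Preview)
Your proof is correct and follows exactly the paper's approach: deduce Theorem~\ref{Tnovak} from Proposition~\ref{Pkernel} by taking each $K_l$ to be the cosine kernel on $\R$, whose positive definiteness is verified via the same trigonometric identity the paper records in its footnote.

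One correction to your closing commentary (which does not affect the proof itself): recovering the uniform constant $1/n$ for general $k$ does \emph{not} genuinely require Theorem~\ref{Tmain}. Since each $C_l$ is a real correlation matrix, so is $M := C_1 \circ \cdots \circ C_k$ by the Schur product theorem, and Vyb\'iral's original Theorem~\ref{TV1} applied once to $M$ already gives
\[
M \circ M \;=\; (C_1 \circ C_1) \circ \cdots \circ (C_k \circ C_k) \;\geq\; \tfrac{1}{n} E_n.
\]
This is in fact how Vyb\'iral proved Novak's conjecture for all $k$, not just $k=1$. Your ``naive'' iteration degrades only because it applies Theorem~\ref{TV1} separately to each factor before combining; applying it once to the Schur product avoids that loss.
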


The two results are similar in that Novak's conjecture uses $\cos(\cdot)$
and $\R^1$ in place of $\exp(-(\cdot)^2)$ and $\ell^2(\mathbb{N})$
respectively. Both results follow from Proposition~\ref{Pkernel}.
\end{enumerate}

\end{document}